\numberwithin{figure}{section}
\numberwithin{equation}{section}
\numberwithin{table}{section}
\patchcmd{\thebibliography}{\section*}{\section}{}{}
\renewcommand{\refname}{REFERENCES}
\newcommand{\ef}{\end{equation}}
\chardef\bslash=`\\ 
\newcommand*\colvec[3][]{
    \begin{pmatrix}\ifx\relax#1\relax\else#1\\\fi#2\\#3\end{pmatrix}
}
\newtheorem{thm}{Theorem}[section]
\newtheorem*{thm*}{Theorem}
\newtheorem{cor}{Corollary}[thm]
\newtheorem{lem}{Lemma}[section]
\newtheorem*{lem*}{Lemma}
\newtheorem*{corl*}{Corollary}
\newtheorem{prop}{Proposition}[section]
\newtheorem{prop*}{Proposition}
\theoremstyle{definition}
\newtheorem{defn}{Definition}[section]
\newtheorem{examp}{Example}
\newtheorem*{examp*}{Example}
\newtheorem*{remark*}{Remark}
\newtheorem*{CC*}{Crossover Conjecture}
\newtheorem*{Note*}{Note}
\newtheorem*{defn*}{Definition}
 \theoremstyle{remark}
\newtheorem{remark}{Remark}[section]
 \renewcommand{\sectionmark}[1]{}
\begin{document}

\title{MUTATIONS AND POINTING FOR BRAUER TREE ALGEBRAS} 
\author{Mary Schaps and Zehavit Zvi}

\maketitle

 \begin{abstract}

Brauer tree algebras are important and fundamental blocks
in the modular representation theory of groups. In this research,
we present a combination of two main approaches to the tilting theory
of Brauer tree algebras.

The first approach  is the theory initiated by Rickard, providing
a direct link between the ordinary Brauer tree algebra and a particular
algebra called the Brauer star algebra. This approach was continued
by Schaps-Zakay with their theory of pointing the tree.

The second approach is  the theory developed by Aihara, relating
to  the sequence  of mutations from the ordinary
Brauer tree algebra to the star-algebra of the Brauer tree. Our main
purpose in this research is to combine these two approaches: 

We find an algorithm based on centers which are all terminal edges,
 for which we are able to obtain a tilting complex
constructed from irreducible complexes of length two \cite{SZ1}, which
is obtained from a sequence of mutations. 
For the algorithm given by Aihara in \cite{Ai}, which he showed in 
Cor. 2.6 gives a two term tree-to-star complex, we prove that Aihara's complex
 is obtained  from the corresponding
completely folded Rickard tree-to-star complex 
by a permutation of projectives.
\end{abstract}
\section{INTRODUCTION}

\noindent This work concerns Brauer tree algebras, a widely studied class of 
algebras of finite representation type which includes all  blocks of cyclic defect 
group in modular group representation
theory. In the last twenty-five years  modular group representation theory
 has been considerably enriched by the introduction of methods from algebra representation theory, 
foremost among them the theory of tilting complexes in \cite{R1}, which led to the Brou\'e 
conjecture that every block whose defect group is abelian is derived equivalent
to its Green correspondent.  

 ~ 

\noindent A block of cyclic defect group is a Brauer 
tree algebra and its Green correspondent is a Brauer star algebra.
Rickard proved \cite{R2} that every Brauer tree algebra has a tilting complex which 
makes it derived equivalent to the correspondng Brauer star algebra.  
Schaps-Zakay showed that the tilting complexes in the 
opposite direction can be constructed form irreducible 
project tive complexes of length two.  Since all the projectives
of the Brauer star algebra have the same simple form, 
the endomorphism algebra of such a tilting complex is 
easily constructed and the structure of the Brauer tree can 
be read off from the complex, making them an excellent introduction
to tilting theory.

~

\noindent There have been two main approaches to the tilting theory
of Brauer tree algebras: the all-at-once approach, going back to Rickard
\cite{R1} (later involving pointing), and the step-by-step approach
going back to König and Zimmermann {[}KZ1{]}, later formulated in terms
of mutations by Aihara \cite{Ai} and used recently by Chan \cite{Ch}
 and Zvonarevna \cite{Zv}. In this project we propose to combine
and compare the two approaches.

\section{DEFINITIONS AND NOTATION}

\subsection{\noindent DERIVED EQUIVALENCE}

\noindent We fix an abelian category $\mathcal C$ and we denote by  $Ch(\mathcal C)$
the category of cochain complexes of objects of $\mathcal C$. 
The differentials of the complex are 
morphisms $\left\{ d_{n}:C_{n}\rightarrow C_{n+1}\right\} $
satisfying  $d_{n+1} \circ d_{n}=0$. For any given complex $C^\bullet$, the 
cocycles  $Z^{n}(C^\bullet)$, coboundaries  $B^{n}(C^\bullet)$, and  cohomology modules 
$H^{n}(C^\bullet)=Z^{n}(C^\bullet)/B^{n}(C^\bullet)$ are defined as in such standard texts as \cite{W}.

~

\noindent If $C^\bullet$, $D^\bullet$\textit{ }are cochain complexes, a\textit{ }morphism
$f_\bullet:C^\bullet\rightarrow D^\bullet$\textit{ }is a cochain map, which is to say, a family
of morphisms $f_{n}:C_{n}\rightarrow D_{n}$ which
commute with $d$ in the sense that $f_{n+1}d_{n}=d_{n}f_{n}$. To avoid sounding
pedantic, we will usually refer to these simply as chain maps, even when they 
are mapping cochain complexes.

~

\noindent A morphism $f_\bullet:C ^\bullet \rightarrow D^\bullet$ between cochain complexes sends
coboundaries to coboundaries and cocycles to cocycles. Thus, it induces module
morphisms $f_{n}^{*}:H^{n}\left(C^\bullet \right)\rightarrow H^{n}(D^\bullet)$.

\noindent \begin{defn}

\noindent A morphism $f_\bullet:C^\bullet\rightarrow D^\bullet$ of cochain complexes is called
a \textit{quasi-isomorphism }if the induced maps $H^{n}\left(C^\bullet\right)\rightarrow H^{n}(D^\bullet)$
are all isomorphisms.

\noindent \end{defn}

\noindent \begin{defn}

\noindent A cochain complex $C$\textbf{ }is called \textit{bounded}\textbf{
}if almost all the $C^{n}$ are zero. The\textit{ }complex $C$\textbf{ }is \textit{bounded
below}\textbf{ }if there is a bound $a$\textbf{ }such that $C^{n}=0$
for all $n<a$. The cochain complexes which are partially or fully bounded
form full subcategories $Ch^{b},\thinspace Ch^{+},\thinspace Ch^{-}$of
$Ch(\mathcal C)$.

\noindent \end{defn}

\noindent \begin{defn}

\noindent The \textit{derived category} of an abelian category $\mathcal C$ is the category obtained from  $Ch(\mathcal C)$ by adding
formal inverses to all the quasi-isomorphisms between chan complexes.  It is called the \textit{bounded derived category} and denoted  $D^{b}(\mathcal C)$ if we consider only bounded complexes.
A \textit{derived equivalence} between two abelian categories is an equivalence of categories between their derived categories.
\noindent \end{defn}

\noindent  Consider a ring $R$ which is assumed to be associative but
not necessarily commutative, which in the sequel will typically be either
 a block of a group algebra over a field of characteristic $p$ dividing
 the order of the group or else a finite dimensional algebra
 over a field of arbitrary characteristic. The category $R-Mod$ of left $R$-modules 
is the abelian category of primary interest to us. For any such ring $R$, let $D^{b}(R)$ be the derived category
of bounded complexes of left $R$-modules.

\noindent \begin{defn}
\noindent We say that a cochain map $f:C\rightarrow D$ is \textit{null
homotopic} if there are maps $s_{n}:C^{n}\rightarrow D^{n+1}$ such
that $f=ds+sd$.

\noindent \end{defn}

\noindent \begin{defn}

\noindent Two cochain maps $f,g:C\rightarrow D$ are \textit{chain homotopic}
if their difference $f-g$ is null homotopic, in other words, if $f-g=sd+ds$
for some $s$. The maps $\left\{ s_{n}\right\} $ are called a \textit{homotopy}
 from $f$ to $g$. We say that $f:C\rightarrow D$ is a
\textit{homotopy equivalence} if there is a map $g:D\rightarrow C$
such that $g\circ f$ is chain homotopic to the identity map $id_{C}$
and $f\circ g$ is chain homotopic to the identity map $id_{D}$.
\end{defn}

~

\noindent \begin{defn}
 Let $f:C\rightarrow D$\textbf{ }be a map of cochain complexes.
The\textit{ mapping cone} of $f$ is a chain complex $Cone(f)$ whose
degree $n$ part is $C_{n+1}\bigoplus D_{n}$. The differential in
$Cone(f)$ is given by the formula:
\begin{alignat*}{1}
 & \ensuremath{d\left(c,b\right)=\left(-d_{C}\left(c\right),d_{D}\left(b\right)+f\left(c\right)\right)},\ensuremath{\quad c\in C_{n+1},~~b\in D_{n}}
\end{alignat*}

\end{defn}

\noindent  Any map $H^{n}\left(f\right):H^{n}\left(C\right)\rightarrow H^{n}\left(D\right)$
can be fit into a long exact sequence of cohomology groups by use of
the following device. There is a short exact sequence
\begin{alignat*}{1}
0 & \rightarrow D\rightarrow Cone\left(f\right)\overset{\delta}{\rightarrow}C\left[1\right]\rightarrow0
\end{alignat*}

\noindent of cochain complexes, where the left map sends $b$ to $(0,b)$,
and the right map sends $(c,b)$ to $\text{-}c$.

\subsection{\noindent TILTING COMPLEXES}

\noindent We now consider complexes $T$ of projective modules over an associative ring $R$. The
notation $T[n]$ denotes the complex which is isomorphic to $T$ as
a module but in which the gradation has been shifted $n$ places to
the left and the differential is the shift of the differential multiplied
by $(-1)^{n}$.

~

\noindent For any ring $R$, let $D^{b}(R)$ be the derived category
of bounded complexes of $R$-modules.

\noindent \begin{defn}\label{TC}Let $R$ be a Noetherian ring. A bounded complex
$T$ of finitely generated projective $R$-modules is called a \textit{tilting
complex} if:

\noindent \renewcommand{\labelenumi}{(\roman{enumi})}
\begin{enumerate}
\item \noindent Hom\textsf{$_{D^{b}\left(R\right)}\left(T,T[n]\right)=0$}
whenever\textsf{ $n\neq0$}.
\item \noindent For any indecomposable projective\textsf{ $P$}, define
the stalk complex to be the complex\textsf{ $P^{.}:0\rightarrow P\rightarrow0$.}
Then every such\textsf{ $P^{.}$ }is in the triangulated category
generated by the direct summands of direct sums of copies of\textsf{
$T.$}
\end{enumerate}
 \end{defn}

\noindent A complex $T$ satisfying only (i) is called
a \textit{partial tilting complex.}

\begin{singlespace}
\noindent \begin{defn}Fix an abelian category $\mathcal C$ and the
category of cochain  complexes $Ch(\mathcal{C})$. For two complexes
$X$\textbf{ }and $Y$\textbf{ }denote by $Z(X,Y)$ the set of morphisms
from $X$\textit{ }to $Y$\textbf{ }which are homotopic to zero. The
collection of all $Z(X,Y)$\textbf{ }forms a subgroup of Hom$_{C(\mathcal{C})}(X,Y)$.
Denote by $K(\mathcal{C})$\textbf{ }the\textbf{ }quotient category,
i.e.{\large{} }$K(\mathcal{C})$ is the category having the same objects
as $Ch(\mathcal{C})$ but with morphisms
 \begin{center} 
{Hom${{}_{K}}_{(\mathcal{C}}{_{)}(X,Y)=}$ Hom${{}_{Ch}}$$_{(\mathcal{C}}$${_{)}(X,Y)/Z(X,Y)}$,}
\end{center}
so that two homotopic maps are identified.
The quotient category $K(\mathcal{C})$\textbf{ }is called the  \textit{homotopy
category},  and a homotopy equivalence between 
complexes is an isomorphism in the homotopy category. For an abelian category ${\mathcal{C},}$ $K^{-}(\mathcal{C})$
is the homotopy category of right bounded complexes in ${\mathcal{C}}$,
and similarly one can define $K^{+}(\mathcal{C})$.\end{defn}
\end{singlespace}

~

\noindent The derived category is not an abelian category, but it is a triangulated category. 
The original theory of tilting concerned modules called tilting modules. 
Happel, in \cite{H} showed that if there was a tilting between two algebras  $\Lambda$ and $\Gamma$,
it induced a functor which was an equivalence of their derived bounded categories.
Rickard \cite{R1} then proved a converse when tilting modules were replaced by tilting complexes,
namely, that  there is a tilting complex $T$ over $\Lambda$ with endomorphism
ring End$_{D^{b}\left(\Lambda\right)}(T)^{op}\cong\Gamma$.

\subsection{\noindent BRAUER TREES}

From now on, we concentrate on a particular class of algebras, the Brauer
tree algebras. Even when the algebra which interests us is the block
of a group algebra, we will not use the actual block but rather its skeleton, 
a Morita equivalent algebra which is basic, so that the quotient by the radical
 is a direct sum of copies of the field. Suppose that in the original 
block, the dimension of the ith simple module was $m_i$.  
 When we have finished calculating the 
tilting complex $T=\bigoplus T_i$ using the skeleton, 
then we can recover the original block as opposite algebra of the endomorphism
ring of the tilting complex  $T'=\bigoplus T_i^{\oplus m_i}$.

\noindent \begin{defn}

\noindent Let $e$ and $m$ be natural numbers. A \textit{Brauer tree}
of type\textit{ }$(e,m)$\textit{ }is a finite tree $(V,\ensuremath{{\cal E}})$
where $V$ is the set of vertices, $\ensuremath{{\cal E}}$ is the
set of edges, $|\ensuremath{{\cal E}|}=e$ (hence $|V|=e+1$), together
with a cyclic ordering of the edges at each vertex and a designation
of an exceptional vertex which is assigned multiplicity $m$.

\noindent \end{defn}

\noindent The set of all edges of vertex $u$ is denoted by $\ensuremath{{\cal E}}(u)$.
By \textquotedbl{}cyclic ordering\textquotedbl{} we mean that for
each edge \textit{E} in $\ensuremath{{\cal E}}(u)$ there is a `next'
edge in $\ensuremath{{\cal E}}(u)$ and that edge has a next edge
in $\ensuremath{{\cal E}(u)}$ etc., until each edge of $u$ is counted
exactly once, in which case\textit{ E} is the next one. We note that
if \textit{E} and $F$ are the only edges of $u$ then $F$ is next
after \textit{E} and \textit{E} is next after $F$.

\noindent \smallskip{}

\noindent Every Brauer tree can be embedded in the plane in such a
way that the cyclic ordering on each $\ensuremath{{\cal E}}(u)$ is
the counterclockwise direction. The exceptional vertex will be
drawn as a black circle and the other vertices as open circles. Two important examples of Brauer trees are:

(i) The \textit{star} with the exceptional vertex in the center.

\noindent \begin{figure*}[h]
\centering
\includegraphics{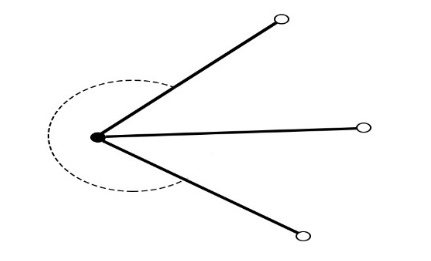}
\captionsetup{labelformat=empty}
\caption{Figure 1}
\end{figure*}

\noindent \smallskip{}

(ii) The \textit{linear tree}, which includes, for example, the Brauer trees of blocks of cyclic defect in the symmetric groups.

\noindent \smallskip{}

\noindent We relate Brauer trees to the structure of algebras. In
the definition we need to refer to uniserial modules, these being
modules in whose radical series each submodule has a simple top, where
the top of a module is the quotient by the radical.

\noindent \begin{defn}An algebra $A$ is called a \textit{Brauer
tree algebra} if there is a Brauer tree such that the indecomposable
projective $A$-modules can be described by the following algorithm:\renewcommand{\labelenumi}{(\roman{enumi})}
\begin{enumerate}
\item There is bijection between the edges of the tree and the isomorphism
classes of simple $A$-modules, i.e. each edge is labelled by the
corresponding isomorphism class. 
\item If $S$ is a simple $A$-module and $P_{S}$ is the corresponding
indecomposable projective $A$-module then $P_{S}\supseteq\text{rad}(P_{S})\supseteq\text{soc}(P_{S})\cong S$
and $\text{rad}(P_{S})/\text{soc}(P_{S})$ is a direct sum of one
or two uniserial modules corresponding to the two vertices of the edge, with composition factors determined 
by a clockwise circuit around the vertex. 
\end{enumerate}
\noindent \end{defn}

\noindent \begin{defn}Let $e$ and $m$ be natural numbers with $e>1$.
Let $K$ be any field containing a primitive $e$th root of unity
$\xi$. Let $\widehat{n}=em+1$. Let the cyclic group \({C_e} = \left\langle g \right\rangle \)
act on the truncated polynomial ring $A=K[x]/x\hat{^{n}}$, $g:x\longmapsto\xi x$.
The \textit{Brauer star algebra} of type $(e,m)$ is the skew group
algebra $
b=A[C_{e}],$
in which $g$ and $x$ obey the relation
$g^{-1}xg=\xi x.$
The algebra $b$ has $e$ distinct simple modules, corresponding to
the idempotents

\[
f_{i}=\frac{1}{e} \overset{e-1}{\underset{j=0}{\sum}}\xi^{-ij}g^{j},\quad i=1,..,e,
\]

\noindent and satisfying 
$f_{i}x=xf_{i+1}.$

~

\noindent The corresponding indecomposable projective left modules
are denoted by
$P_{i}=bf_{i},\quad i=0,\ldots,e-1.$
 Each $P_{i}$ is uniserial, and the projective cover of
rad$(P_{i+1})$ is $P_{i}$. We let $\left\{ x^{s}f_{i}\right\} _{s=0}^{em}$
be a basis for $P_{i}$, and define the following maps:
\begin{alignat*}{2}
\varepsilon_{i}:P_{i} & \rightarrow P_{i}, & \varepsilon_{i}\left(f_{i}\right)=x^{e}f_{i}\\
\\
\tilde{h}_{ij}:P_{i} & \rightarrow P_{j},\thinspace\thinspace & \tilde{h}_{ij}\left(f_{i}\right)=x^{k}f_{i} & ,~~k\equiv j-i(mod\,e),\quad0\le k<e.
\end{alignat*}

\noindent For $i\neq j$, we denote $\tilde{h}_{ij}$ by $h_{ij}$,
and for $i=j$ by $id_{i}$. For any $0\le\ensuremath{\ell}\leq m$
we call a map $\varepsilon_{j}^{\ell}\tilde{h}_{ij}\left(=\tilde{h}_{ij}\varepsilon_{i}^{\ell}\right)$
\textit{normal homogeneous} of degree $\ensuremath{\ell e+k}$, 

\noindent where 
\begin{align*}
k & \equiv j-i\left(mod\text{~}e\right),\quad0\le k<e.
\end{align*}
\end{defn}

\noindent \begin{corl*}[{\cite[Lemma 1.1]{SZ1}}]For the Brauer star algebra $b$, if $j\neq i$, and
$\left\{ j-i\right\} _{e}$ is the residue mod $e$, then there are
$m$ normal homogeneous maps $\varepsilon^\ell h_{ij}$,with degrees $s$  for $s=\left\{ j-i\right\} _{e}+\ell e\thinspace\thinspace\thinspace\thinspace\:\text{where\thinspace\thinspace}\ell=0,...,m-1$.
If $j=i,$ then there are $m+1$ normal homogeneous maps $\varepsilon^s:P_{i}\rightarrow P_{i}$,
for $s=0,e,2e,...,me.$\end{corl*}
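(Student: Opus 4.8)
The plan is to compute, directly from the definitions just given, the space $\mathrm{Hom}_b(P_i,P_j)$ of module maps and then to identify among them the ones that are normal homogeneous. The key point is that every $P_i$ is uniserial with basis $\{x^s f_i\}_{s=0}^{em}$, so a $b$-module homomorphism $\vp:P_i\to P_j$ is completely determined by the image $\vp(f_i)$, which must be a $b$-submodule generator consideration — actually any element of $P_j$ annihilated by nothing extra, so $\vp(f_i)$ may be an arbitrary element of $P_j$. First I would write $\vp(f_i)=\sum_s c_s x^s f_j$ with $c_s\in K$, and observe that since $f_i x = x f_{i+1}$ (equivalently $x^s f_i$ lives in the $f_{i+s}$-weight space), compatibility with the $C_e$-action forces $\vp(f_i)$ to be a $K$-multiple of the single basis vector $x^{s_0} f_j$ whose weight matches, where $s_0$ is the smallest nonnegative integer with $s_0\equiv j-i\pmod e$. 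Hence a map is, up to scalar, determined by choosing an exponent $s\equiv j-i\pmod e$ with $0\le s\le em$, i.e.\ $s = k+\ell e$ for $k=\{j-i\}_e$, and I must count the admissible $\ell$.

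Next I would carry out the counting. In the case $j\ne i$ we have $k=\{j-i\}_e\in\{1,\dots,e-1\}$, and $s=k+\ell e\le em$ forces $\ell e\le em-k<em$, hence $\ell\in\{0,1,\dots,m-1\}$, giving exactly $m$ maps; these are precisely $\varepsilon_j^\ell h_{ij}=h_{ij}\varepsilon_i^\ell$ by the very definition of $\varepsilon$ and $\tilde h_{ij}$ (one checks $\varepsilon_j^\ell h_{ij}(f_i)=\varepsilon_j^\ell(x^k f_j)=x^{k+\ell e}f_j$, using $h_{ij}(f_i)=x^k f_j$ and $\varepsilon_j(x^t f_j)=x^{e+t}f_j$). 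In the case $j=i$ we have $k=0$, so $s=\ell e$ with $0\le \ell e\le em$, i.e.\ $\ell\in\{0,1,\dots,m\}$, giving $m+1$ maps $\varepsilon_i^s=\id_i\varepsilon_i^\ell$ with degrees $0,e,2e,\dots,me$; the degree formula $\ell e+k$ specializes correctly in both cases. I would also note that these maps are $K$-linearly independent (distinct powers of $x$) and that any module map is a scalar multiple of one of them, so the list is complete.

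The main obstacle, such as it is, will be justifying cleanly that a $b$-module map $P_i\to P_j$ must send $f_i$ into the one-dimensional weight space $K x^{s_0} f_j$ rather than into a larger subspace — in other words, that there are no "inhomogeneous" maps. This is where the relation $g^{-1}xg=\xi x$ together with $f_i=\frac1e\sum_j \xi^{-ij}g^j$ does the work: $\vp$ is $b$-linear, hence in particular $C_e$-equivariant, and $f_i$ generates the $\xi^{-i}$-eigenspace-type summand, so $\vp(f_i)$ must lie in the image of $f_i$, which in $P_j$ is spanned by exactly those $x^s f_j$ with $s\equiv j-i\pmod e$; since $P_j$ is uniserial the further requirement that $\vp$ be a genuine module map (not just fixing the grading) pins $\vp(f_i)$ to a single such basis vector up to scalar once we also insist it be homogeneous, but in fact any $K$-combination of the $x^{k+\ell e}f_j$ is a legitimate module map, and the corollary is counting a spanning set of homogeneous ones. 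So the real content is: (a) $\dim_K \mathrm{Hom}_b(P_i,P_j)$ equals the number of $s$ in $\{0,\dots,em\}$ with $s\equiv j-i\pmod e$, and (b) the normal homogeneous maps $\varepsilon^\ell h_{ij}$ (resp.\ $\varepsilon^s$) form a basis. Both (a) and (b) are immediate from the explicit basis $\{x^s f_i\}$ and the formulas for $\varepsilon_i,\tilde h_{ij}$, so the proof is short; I would present it as the two-case count above with the eigenspace remark supplying the only nonformal step.
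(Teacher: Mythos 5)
Your proposal is correct, and since the paper states this corollary only as a citation of \cite[Lemma 1.1]{SZ1} without reproducing a proof, there is nothing internal to compare it against; your argument is the standard one, namely the identification $\operatorname{Hom}_b(bf_i,bf_j)\cong f_ibf_j$ followed by the observation that $f_ix^sg^tf_j$ is nonzero only when $s\equiv j-i\pmod e$, so that $f_ibf_j$ has basis $\{x^sf_j : s\equiv j-i \pmod e,\ 0\le s\le em\}$, and the count of such $s$ gives $m$ or $m+1$ as claimed. One small slip to fix before writing it up: in your second step you first assert that $C_e$-equivariance forces $\vp(f_i)$ to be a multiple of the \emph{single} basis vector $x^{s_0}f_j$ of smallest admissible exponent, which is false since all $x^{k+\ell e}f_j$ lie in the same $g$-eigenspace; you correct this yourself a few lines later (an arbitrary $K$-combination of these is a legitimate map, and the corollary counts a homogeneous basis), so simply state the eigenspace condition $s\equiv j-i\pmod e$ once, correctly, and conclude.
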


\noindent \begin{defn}A cochain map $l_{\bullet}$ between $C^{\bullet}$ and $D^{\bullet}$
is called \textit{normal homogeneous} if each vertical map is normal
homogeneous.\end{defn}

\noindent \begin{defn}We call the homomorphism $\varepsilon_{i}^{m}:P_{i}\rightarrow P_{i}$
the \textit{socle map}, for the obvious reason that it maps the top
of $P_{i}$ into its socle \(\left\langle {{x^{em}}{f_i}} \right\rangle \).
\noindent \end{defn}

\noindent A partial tilting complex $T$\textit{ }for the Brauer star
algebra $b$ is called \textit{two-restricted}$(PTC_{2})$ if it is
a direct sum of shifts of the indecomposable complexes

\noindent \begin{center}
$\begin{array}{cccccccccccccc}
S_{i}: &  &  &  &  &  &  & 0 & \rightarrow & P_{i} & \rightarrow & 0\\
T_{ij}: &  &  &  &  &  &  & 0 & \rightarrow & P_{i} & \rightarrow & P_{j}& \rightarrow & 0,\quad i\neq j
 
\end{array}$
\par\end{center}

\noindent where the first nonzero  component of $S_{i}$
and $T_{ij}$ is in degree zero. The complexes $S_{i}[n]$ and $T_{ij}[n]$
are called \textit{elementary.} The map from $T_{ij}$ to $T_{ij}$
which is $\varepsilon_{i}^{m}$ on $P_{i}$ and zero on $P_{j}$ is
called the \textit{socle chain map}. It is chain homotopy equivalent
to the map which is zero on $P_{i}$ and $-\varepsilon_{j}^{m}$ on
$P_{j}$. Note that one can show that any indecomposable complex satisfying
Def \ref{TC}(i), which is nonzero in at most two degrees, is elementary 
and that a basis of the endomorphism ring of a tilting 
complex in $TC_2$ is given by the normal homogeneous maps \cite{SZ1}

~

~

\noindent \begin{defn}Let $B$ be a Brauer tree of type $(e,m)$.
A \textit{pointing} on $B$ is the choice, for each nonexceptional
vertex $u$, of a pair of edges $(i,j)$ which are adjacent in the
cyclic ordering at $u$. If there is only one edge $i$ at $u$, then
we take $(i,i)$ as the required pair. The tree $B$ together with
a pointing is called a \textit{pointed Brauer tree}.\end{defn}

\noindent \begin{remark}Recall that we have represented each Brauer
tree by a planar embedding and the cyclic ordering at each vertex
by counterclockwise ordering of the edges in the plane. We then represent
the pointing $(i,j)$ by placing a point in the sector between edge
$i$ and edge $j$ in a small neighborhood of $u$, as in Figure 2.\end{remark}

\noindent \begin{defn}Let $B$ be a Brauer tree with vertex set $V$.
The distance $d(u)$ of any vertex $u\in V$ from the exceptional
vertex $u_{0}$ is the number of edges in a minimal path from $u$
to $u_{0}$ (and hence in any path without backtracking, since the
graph is acyclic).\end{defn}

\noindent \begin{defn}Let $B$ be a Brauer tree with edge set $\ensuremath{{\cal E}}$.
An \textit{edge numbering} of $B$ is a Brauer tree with all its edges
numbered by $1,\ldots,e$. The \textit{vertex numbering} of $B$ is obtained
from the edge numbering by giving the same number as the edge to the
farthest vertex from the exceptional vertex on the edge. The exceptional
vertex is numbered as 0.\end{defn}

~

\noindent \begin{defn}  A\textit{
Green's walk} for a planar tree is a counterclockwise circuit of the
tree as if one were walking around the tree touching each edge with
the left hand. Each pointing and each choice of an initial brach determines an
 edge numbering  by starting at the exceptional vertex $v$
and taking a Green's walk around the tree which begins with the initial branch, 
and numbering the vertices and corresponding edges as $1, 2, 3,\dots, e$
as one come to the points. Figure
2 shows an example of such a vertex pointing.\end{defn}

~
\noindent \begin{defn}
 At any vertex besides the exceptional vertex, we will call the first edge that one
would meet on a Green's walk around the tree the \textit{ primary edge
}of the vertex, and the first edge one would meet on a 
reversed Green's walk will be called the \textit{coprimary edge}.
The pointing which puts the point between the entering edge and the 
primary edge at each vertex will be called the \textit{ordinary pointing}
and the pointing which puts the point between the entering vertex and
the coprimary edge will be called the  \textit{reversed pointing}.
The pointing which places the point first to the left and then to the right of 
the entering vertex, alternating as one goes out from the exceptinal vertex, will be called the 
\textit{left alternating pointing}, and there is dual which we will not need.
\end{defn} 

~

\noindent As described in \cite{SZ2}, each pointing determines a two-restricted star-to-tree tilting 
complex, in which the projectives of the tilting complex are from the 
Brauer star with the same $(e,m)$ and the opposite algebra of the 
endomorphism ring in the homotopy category is isomophic to the Brauer tree algebra of the tree 
which was pointed. The components $T_i$ of this star-to-tree
complex are stalk complexes for edges at the exceptional vertex
and complexes  $T_{ij}[-n_i]$ or $T_{ji}[-n_i+1]$, depending on 
whether the point is before or after $i$ in the cyclic ordering from the entering 
vertex $j$. The shifts are adjusted so that every $P_i$ appears in a unique degree $n_i$. A different pointing would give a different tilting complex 
with isomorphic endomorphism ring.
~
\noindent In the example in Figure 2, the vertex numbered $7$ corresponds to a partial two-restricted tilting complex,
ordered so that the vertical maps are generators of the endomorphism ring of 
the partial tilting complex. Note that the socle map is located just where 
the point is in the diagram, between $5$ and $8$.

\noindent \begin{figure*}[ht]
\centering
\includegraphics[scale=0.3]{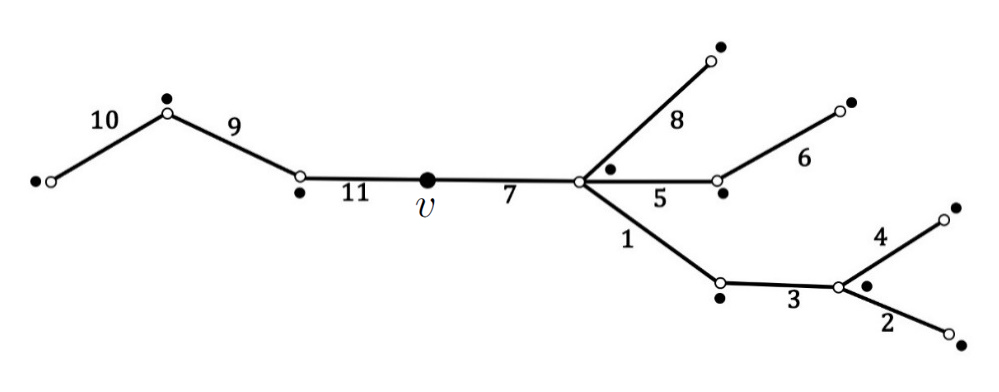}
\captionsetup{labelformat=empty}
\caption{Figure 2}
\end{figure*}

~

\noindent \begin{center}
$\begin{array}{cccccccccccccc}
T_{7}: &  &  &  &  &  &  & 0 & \rightarrow & P_{7} & \rightarrow & 0\\
 &  &  &  &  &  &  & & & id  \downarrow \\
T_{1}: &  &  &  &  &   0 & \rightarrow & P_{1} & \rightarrow  &  P_{7} & \rightarrow & 0\\
 &  &  &  &  &  &  & h_{15} \downarrow & & id \downarrow\\
T_{5}: &  &  &  &  & 0 & \rightarrow & P_{5} & \rightarrow  &  P_{7} & \rightarrow & 0\\
 &  &  &  &  &  &  & & & \varepsilon^m \downarrow\\
T_{8}: &  &  &  &  &  &  & 0 & \rightarrow & P_{7} & \rightarrow & P_{8}& \rightarrow & 0\\
 &  &  &  &  &  &  & & & id  \downarrow
\end{array}$
\par\end{center}

\noindent \begin{defn}Consider a sequence $\{r_i\}_{i=1}^{l}$ of elements
of $\left\{ 1,...,e\right\} $. Set
\[
h=\widetilde{h}_{r_{l-1}r_{l}}\circ...\circ\widetilde{h}_{r_{1}r_{2}}=\varepsilon_{r_{l}}^{\alpha}\widetilde{h}_{r_{1}r_{l}}.
\]

\noindent Then the sequence is $\textit{short}$ if $\alpha=0$ and
$\textit{long}$ if $\alpha>0$. We generally represent the sequence
in the form $r_{1}\rightarrow r_{2}\rightarrow...\rightarrow r_{l}$.\end{defn}

\noindent \begin{examp}If $e=11$ as in the example above,
\begin{itemize}
\item $1\rightarrow 5\rightarrow 7$ is short
\item $5\rightarrow 1\rightarrow 7$ is long.
\end{itemize}
\noindent \end{examp}

\noindent There is a result from \cite{SZ1} showing that  
a chain map $\ell_\bullet:T_{ik} \rightarrow T_{jk}$  has the identity map at $P_k$
if  $i \rightarrow j \rightarrow k$ is short and is the socle map if
$i\rightarrow j \rightarrow k$ is long, and similarly for the dual
map from  $T_{ij}$ to $T_{ik}$.

\subsection{\noindent MUTATION}

\noindent It is, of course, possible to define tilting complexes between
two general Brauer tree algebras. Of particular importance are the
tilting mutations of {[}Ai{]}, which go back to work of Rickard \cite{R2} and
Okuyama \cite{O}, or alternatively, to Kauer \cite{K}. Let $A$ be a finite dimensional basic algebra, with projective modules $P_j$. To each $j$,
we can associate an idempotent $\tilde{e}_{j}$ with $1_{A}$=$\stackrel[j=1]{e}{\sum}$$\tilde{e}_{j}$.

\noindent \begin{defn}Fix an $i$ and define $e_{0}=\underset{j\neq i}{\sum}$$\tilde{e_{j}}$.
For any $j$$\ensuremath{{\cal \in E}}$ we define a complex by 

$T_{j}^{(i)}=\begin{cases}
\begin{array}{cccc}
(0th) & \, & (1st) & \,\\
P_{j} & \longrightarrow & 0 & \:\:j\neq i\\
Q_{i} & \overset{\pi_{i}}{\longrightarrow} & P_{i} & \:\:j=i
\end{array} & \,\end{cases}$

\noindent where $Q_{i}\overset{\pi_{i}}{\longrightarrow}P_{i}$ is
a minimal projective presentation of $\text{\ensuremath{\tilde{e}_{j}A}}/\tilde{e}_{j}Ae_{0}A$.
Now we define $T^{(i)}:=\varoplus_{j\ensuremath{{\cal \in E}}}T_{j}^{(i)}$.
The\textit{ mutation }$\mu_{i}^{+}$ of $A$ is $A'$ $\cong$ End$_{D^{b}(A)}$$T^{(i)}$.
We will also consider the dual variant, as in $[S].$

$T_{j}^{(i)}=\begin{cases}
\begin{array}{cccc}
(-1st) & \, & (0th) & \,\\
0 & \longrightarrow &P_{j} & \:\:j\neq i\\
P_{i} & \overset{\pi_{i}}{\longrightarrow} & Q_{i} & \:\:j=i
\end{array} & \,\end{cases}$

~

\noindent where $Q_{i}$ is the minimal injective hull of the
quotient of $P_{i}$ by the largest submodule containing only
components isomorphic to the simple module  $S_{i}$.  This injective hull will be a direct
sum of injective modules (which are also projective) whose irreducible
socles give the socles of this quotient. We will denote this 
by $\mu^{-} \cite{AI}$ (see, e.g., \cite{Zv} for more detail in the case of Brauer trees.)

\end{defn}

~

\noindent Since $A$ is a symmetric algebra, either version of the mutation will give a tilting complex. 
(A similar complex can be defined also if $A$ is not symmetric, but then we get 
a complex which is no longer a tilting complex.) 

~

\noindent Now let $A$ be a Brauer tree algebra.  Aihara showed in \cite{Ai} that there is a simple combinatorial
operation on edges $j$$\ensuremath{{\cal \in E}}$ which corresponds to the mutation:
The edge $j$ is detached from both of its endpoints, and reattached to the tree at the farther end 
of the edge which is next before it in cyclic ordering.  
If the edge $j$ is a leaf, then there is only one reattachment made.

~

\noindent   By dualizing of Aihara's main theorem, \cite{Ai} Theorem 2.2, the mutation $\mu^{-}$ would correspond to the dual version of Aihara's operation
on the Brauer tree, namely, reattaching to the farther end of the edge which is \textit{after} it in the 
cyclic ordering. The diagrams to demonstrate this can be found in \cite{Zv}.

~

\noindent \begin{lem}\label{Ai} The inverse functor to the functor $G^+$ given by a mutation $\mu_i^+$ is the functor
$G^-$ given by $\mu_i^-$. 
\end{lem}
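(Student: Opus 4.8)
The plan is to show that the composite functors $G^-\circ G^+$ and $G^+\circ G^-$ are each naturally isomorphic to the identity functor on the relevant derived category, by exhibiting the mutations at the level of tilting complexes and checking that applying $\mu_i^-$ to the algebra $A' = \operatorname{End}_{D^b(A)}(T^{(i)})^{op}$ returns a tilting complex isomorphic in $D^b(A')$ to $A'$ itself (as a stalk complex), so that the induced derived equivalence is the identity up to isomorphism. First I would fix the edge $i$ and recall that, by Aihara's combinatorial description already quoted in the excerpt, $\mu_i^+$ detaches edge $i$ and reattaches it at the far end of the edge immediately before it in the cyclic order, while $\mu_i^-$ reattaches at the far end of the edge immediately after. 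The key observation is that these two operations on the Brauer tree are mutually inverse as combinatorial moves: performing the ``before'' reattachment and then the ``after'' reattachment (at the same edge $i$, now in its new position) brings edge $i$ back to exactly its original endpoints and cyclic position. I would verify this by a short case analysis — the generic case where $i$ is an interior edge, and the degenerate case where $i$ is a leaf and only one reattachment happens — using the planar/counterclockwise convention for the cyclic ordering.

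Next I would pass from the combinatorics to the functors. Applying $G^+ = -\otimes^{\mathbf{L}} T^{(i)}$ gives a derived equivalence $D^b(A)\to D^b(A')$; I would then compute the complex $T^{(i)\prime}$ over $A'$ defining $\mu_i^-$ and show, using the explicit two-term shapes from the definition (for $\mu^+$, $Q_i\xrightarrow{\pi_i}P_i$ in degrees $0,1$; for $\mu^-$, $P_i\xrightarrow{\pi_i}Q_i$ in degrees $-1,0$) together with the classification of two-term partial tilting complexes for Brauer tree algebras, that $G^-$ applied after $G^+$ sends each stalk complex $P_j^{\cdot}$ over $A$ to a complex isomorphic in $D^b(A)$ to $P_j^{\cdot}$. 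For the components $j\neq i$ this is immediate since $T_j^{(i)}$ is the stalk $P_j$; the work is concentrated at $j=i$, where one must check that the ``Okuyama–Rickard'' twist encoded by $\mu_i^+$ is undone by the dual twist $\mu_i^-$ — this is where the triangle $0\to D\to \operatorname{Cone}(f)\to C[1]\to 0$ from the excerpt is the right tool, realizing $T_i^{(i)}$ as a cone and then splitting it off after the dual mutation. An entirely symmetric argument handles $G^+\circ G^- \cong \operatorname{id}$.

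The main obstacle I expect is the bookkeeping at the mutated edge $i$: one must be careful that the minimal projective presentation $Q_i\xrightarrow{\pi_i}P_i$ used by $\mu^+$ and the minimal injective (= projective) hull used by $\mu^-$ are genuinely adjoint constructions, i.e. that the projective $Q_i$ produced by $\mu^+$ is, up to isomorphism, the one ``consumed'' by the subsequent $\mu^-$, and vice versa. Concretely, after $\mu_i^+$ the edge $i$ sits at a new vertex, and the edge that is now ``after'' it in the cyclic order at that vertex is precisely the edge that was ``before'' it originally; translating this identity of edges into an isomorphism $Q_i \cong$ (the projective appearing in the dual presentation over $A'$) is the crux, and it relies on the Brauer tree combinatorics rather than abstract nonsense. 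Once that identification is in hand, the rest is the standard fact (already invoked in the excerpt: $A$ symmetric $\Rightarrow$ either mutation is a tilting complex, so induces a derived equivalence) that a derived equivalence sending every projective stalk to itself, up to isomorphism, is the identity functor up to natural isomorphism; hence $G^-\circ G^+\cong\operatorname{id}$ and $G^-$ is the inverse of $G^+$.
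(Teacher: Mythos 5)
Your proposal is correct and follows essentially the same route as the paper: both reduce the problem to the single component at the mutated edge $i$, realize the composite there as the cone of a map of two-term complexes whose relevant component is an identity on $Q_i$, and contract that identity summand by an explicit homotopy, with the identification of the $Q_i$ produced by $\mu_i^+$ with the one consumed by $\mu_i^-$ being exactly the Brauer-tree bookkeeping you single out as the crux. The only caution is that your closing appeal to ``a derived equivalence fixing all projective stalks is naturally isomorphic to the identity'' is stronger than what holds in general (an outer automorphism fixing the isomorphism classes of projectives would be a counterexample), but the paper's own proof concludes at the same level of precision, namely that the composite tilting complex reduces to the direct sum of the projective stalk complexes.
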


\begin{proof} Let us assume that all the stalk complexes are in degree $0$. We let the projectives in the 
Brauer tree algebra $A''$ on which $\mu_i^+$ is acting be denoted by $P_j^{''}$, and the corresponding 
projective modules in the algebra $A'$ on which $\mu_i^-$ is acting be denoted by $P_j^{'}$.
Let $Q_i^{''}$  be the corresponding projective cover of the radical, and let $Q_i^{'}$ be
the corresponding injective hull of the socle quotient.  Because of the biserial property 
of projectives of Brauer tree algebras, module  $Q_i^{''}$ is the sum of one or 
two projectives, corresponding to the edges to which $i$ is reattached, 
and similarly the projective-injective  $Q_i^{'}$ is the direct sum 
of the corresponding two projectives.  The actions of our two functors 
on the projectives are given by

\noindent \begin{center}
$\begin{array}{cccccccccccccccc}
G^{+}\left(P_{j}^{'}\right): &  & &  &  &   & 0 & \rightarrow & P_{j}^{''} & \rightarrow & 0, j \neq i\\
G^{+}\left(P_{i}^{'}\right): &  & & &   &  & 0 & \rightarrow & Q_{i}^{''} & \rightarrow  & P_{i}^{''} & \rightarrow & 0\\
G^{-}\left(P_{j}^{''}\right): & & &  & &   & 0 & \rightarrow & P_{j}^{'} & \rightarrow & 0, j \neq i\\
G^{-}\left(P_{i}^{''}\right): & &  &  & 0 & \rightarrow & P_{i}^{'} & \rightarrow  & Q_{i}^{'} & \rightarrow & 0
\end{array}$
\par\end{center}
 
Since the projectives correspond for all $j \neq i$, all we need to show is that  that  the cone $Cone\left(G^{+}\left(P_{i}^{'}\right) \overset{l_\bullet}\rightarrow G^{+}\left(Q_{i}^{'}\right)\right)$, 
with $l_{0}$  given by identity maps between the two copies of  $Q_{i}^{'}$ , is homotopy equivalent to  $P_i^{''}$. 

~

\noindent First, we define chain maps $f_{\bullet}$ from $Cone(l_{\bullet})$
to $P_i^{''}$ and $g_{\bullet}$ from $P_i^{''}$ to $Cone(l_{\bullet})$.
\begin{flalign*}
f_{-1} & =0&f_{0} & =(\pi_{P_{i}^{''}})\\
g_{-1} & =0 &g_{0} & =(0,id_{P_{i}^{''}})
\end{flalign*}
The resulting diagram is as follows,

\noindent \begin{equation*}
\xymatrix@C=90px@R=30px{ 
 {Q_{i}^{''}} \ar@{->}[r]^ {(id_{Q_{i}^{''}},h^{''})} \ar@{->}[d]^{0}  
& {Q_i^{''} \oplus P_{i}^{''}} \ar@{->}[d]^{-h^{''}\circ \pi_{Q_{i}^{''}} +\pi_{P_{i}^{''}}}\\
 {0}\ar@{->}[d]^-{0} \ar@{->}[r]^-{0}
& {P_i^{''}}\ar@{->}[d]^ {(0,id_{P_{i}^{''}})}\\ 
 {Q_{i}^{''}} \ar@{->}[r]^-{(id_{Q_{i}^{''}},h^{''})}   
& {Q_i^{''} \oplus P_{i}^{''}}}
\end{equation*}

\noindent and a diagram chase will show that both  $f_{\bullet}$
and  $g_{\bullet}$  are chain maps. The composition from the stalk complex to itself is the identity.
The homotopy from the composition $h_\bullet = g_\bullet \circ f_\bullet$ to the identity is given by 
$T=(-\pi_Q)$.

~

\noindent Using functoriality, we pull the functor $G^{+}$ outside the cone.
Hitting it on the left  by  $(G^{+})^{-1}$ we get
\[
(G^{+})^{-1}(P_i^{''})=\left( Cone\left(P_{i}^{'} \rightarrow Q_{i}^{'}\right)\right)=G^{-}(P_i^{''})
\]
and the remaining stalk complexes all correspond, which gives the desired result.  
\end{proof}

~

\noindent In the same paper \cite{Ai}, in Corollary 2.6, Aihara gave an algorithm for reducing a Brauer tree to 
a Brauer star.  In essence, the algorithm consists in doing a mutation centered at an edge 
of distance one from the exceptional vertex, as long as such edges exist.  In terms of 
number of steps to the star, this class of algorithms is very efficient, requiring only 
$e-\ell$ steps, where $\ell$ is the number of branches at the exceptional vertex, since each
step creates a new branch at the exceptional vertex.

~

\noindent Any mutation on a symmetric algebra gives a tilting and produces another symmetric algebra.
Thus if we have a sequence of mutations, we get a derived equivalence and hence a tilting 
complex.  Furthermore, any mutation of Brauer trees produces a one-to-one correspondence
of edges.  Thus, if we have a sequence of mutations leading to the Brauer star, any of the 
natural counterclockwise numberings of the Brauer star will induce a numbering of the Brauer tree.
The subject of this paper is the relationship between the choice of reduction procedure and this
natural numbering.

\section{MUTATION REDUCTION}

Assume we are given a Brauer tree $G$, with multiplicity $m$.  If $m>1$, 
then there is a designated exceptional vertex $v$. For $m=1$, we assume that one of
the vertices has been chosen as the exceptional vertex $v$. Since our graph is a tree,
there is a well-defined distance of each vertex $u$ from $v$ given by 
counting the number of edges on the unique path connecting them.
If the edges of the tree are labelled, then each vertex can be given
 the same label as the first edge on this unique path.
 
~

\noindent \begin{defn}
A \textit{mutation reduction} is a mutation or  sequence of mutations such that the distance of 
each vertex from the exceptional vertex does not ever increase,
and such that at least one such distance actually decreases. A mutation
reduction which ends at the Brauer star is called \textit{complete}.
\end{defn}

~

\noindent \begin{lem}\label{branch} Assume we are given a Brauer tree.
 
\begin{enumerate}
\item A mutation which is a mutation reduction must be centered at a primary edge.
\item A mutation centered at a primary edge connected to an edge adjacent to the 
exceptional vertex is a mutation reduction.
\item After a complete mutation reduction, all the edges from a given branch 
form an interval around the Brauer star, and these intervals follow the counterclockwise ordering of the branches.
\end{enumerate}
\end{lem}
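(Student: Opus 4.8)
The plan is to run all three parts off a single computation: the effect of one mutation on the distance function $d(\cdot)$. Using the combinatorial rule for $\mu^{+}$ recalled just before \lemref{Ai}, a mutation centred at an edge $j=\{u,w\}$ --- labelled so that $d(u)<d(w)$, hence $d(w)=d(u)+1$; set $d=d(u)$ --- deletes $j$, which disconnects the tree into the component $T_u$ containing $v$ and a component $T_w$, and reinserts $j$ as the edge $\{u',w'\}$, where $\{u,u'\}$ (with $u'\neq u$) and $\{w,w'\}$ (with $w'\neq w$) are the edges immediately before $j$ in the cyclic orders at $u$ and at $w$; if $j$ is a leaf, whose leaf vertex is then necessarily $w$, only the $u$-side reinsertion is made and $j$ becomes $\{u',w\}$. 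The first step is the one-line check that, because $T_u$ is untouched, the mutation fixes every distance in $T_u$, re-roots $T_w$ at $w'$ (resp.\ $w$) with new distance $d(u')+1$, shifts every distance in the part of $T_w$ lying beyond $w'$ by the resulting increment, and fixes the rest of $T_w$. Since the entering edge $e_u$ of $u$ has far endpoint at distance $d-1$ whereas a deeper edge at $u$ has far endpoint at distance $d+1$, one has $d(u')=d-1$ exactly when $j$ is the primary edge $p_u$ of $u$ (the edge immediately after $e_u$), and $d(u')=d+1$ otherwise; note also that $j$ can only ever be the primary edge of its endpoint nearer to $v$, since the entering edge of the farther endpoint is $j$ itself.

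Given this, part (1) is immediate. If $d=0$, so $u=v$, then $w$ had distance $1$ and after the mutation sits beyond (via $j$, possibly after re-routing) a vertex at distance $\geq 1$, so $d(w)$ increases to at least $2$. If $d\geq 1$ but $j\neq p_u$, then $d(u')=d+1$, the new root of $T_w$ lands at distance $d+2$, and $w$ --- a descendant of that root after the mutation --- lands at distance $d+3>d+1$. Either way some distance strictly increases, so a mutation reduction is centred at a primary edge $p_u$ with $d(u)\geq 1$. For part (2), take $j=p_u$ with $d(u)=1$: then $e_u=\{v,u\}$ is the edge before $j$ at $u$, its far endpoint $u'$ is $v$, and the new root of $T_w$ lands at distance $1$, whereas that vertex had distance $3$ (or $w$ had distance $2$ in the leaf case); no distance of $T_w$ increases, because the old root $w$ and the new root $w'$ are adjacent in $T_w$, so distances measured from $w'$ exceed those from $w$ by at most $1$ against a base drop of $2$; and $T_u$ is fixed. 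Hence some distance strictly drops and none increases, i.e.\ the mutation is a mutation reduction.

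For part (3) the plan is to induct on the number of steps, carrying the invariant: \emph{at every stage of the reduction, each branch $B_i$ of $G$ at $v$ is the union of the edge sets of a block of cyclically consecutive current branches at $v$, and these blocks occur around $v$ in the cyclic order $B_1,\dots,B_r$}. The base case is $G$ itself. The computation behind part (1) shows that a mutation raising no distance is centred at a primary edge, so every step of a mutation reduction is of this kind. If the primary edge $p_u$ has $d(u)\geq 2$, then $v$ is not one of the four vertices whose cyclic order changes and the re-routed $j$ stays inside the single current branch containing $u$; so the cyclic order at $v$ and the partition of edges into current branches are both unchanged, and the invariant survives. If $d(u)=1$, then $e_u$ is incident to $v$; after the mutation $j$ is also incident to $v$ and, by the re-attachment (Kauer) rule, is inserted immediately beside $e_u$ in the cyclic order at $v$, while the current branch rooted at $e_u$ splits into exactly the two current branches now rooted at $e_u$ and at $j$, whose edge sets partition the old one. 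Thus one entry of the block carrying $B_i$ is replaced by two adjacent entries --- still a block --- the total edge set assigned to each $B_i$ is unchanged, and the cyclic order of blocks is preserved. Evaluating the invariant at the terminal Brauer star, where each current branch is a single edge, is exactly the statement of part (3).

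The step I expect to take the most care is the last one: one must check that the re-attachment rule really drops $j$ immediately beside $e_u$ in the cyclic order at $v$ (so that blocks remain contiguous and in the correct cyclic order), and that in the $d(u)\geq 2$ case every subtree carried along by the mutation remains within the branch it started in, so that no current branch other than the one being split is disturbed.
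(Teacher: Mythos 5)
Your proposal is correct and follows essentially the same route as the paper's proof: part (1) via the observation that a non-primary centre is reattached at greater distance, part (2) via the branch-splitting picture at the exceptional vertex with a check that distances in the moved subtree do not increase, and part (3) by iterating the fact that each split replaces one branch by two adjacent ones. Your version is somewhat more systematic (a single distance computation drives all three parts, and part (3) is phrased as an explicit inductive invariant), but the underlying argument is the same.
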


\begin{proof}
\begin{enumerate}
\item If the mutation is not centered at a primary edge, then the mutation reattaches the center at the far end of the edge before it in the cyclic ordering, which is at greater distance from the exceptional vertex, in contradiction to our assumption that we have a mutation reduction.
\item The only mutation which can change the branch structure under a mutation reduction is a 
mutation by a primary edge $w$ connected to an edge $u$  adjacent to the exceptional vertex.
The effect of such a mutation is to is to create a new branch by lopping off $w$ and  the subgraph $S$
 of all edges connected to the exceptional vertex through the center $w$  of the mutation.  
The original branch rooted at $u$ will now be replaced by two branches, one rooted at $w$
 and connected to $S$ at the vertex at the opposite end of the edge $t$ which was last in
 counterclockwise order at the vertex to which $w$ was originally connected.
The other branch will be rooted at $u$, will be changed only in that $w$ and $S$ were removed, 
 and will follow the branch rooted at $w$ immediately in the counterclockwise ordering at $v$.

~

\noindent It remains to show that this operation was actually a mutation reduction.  The edge $w$, 
once at distance $1$, is now at distance $0$.
The edge $t$, once at distance $2$, is now at distance $1$, and every edge originally connected
 to $t$ and thus connected to the exceptional vertex via three edges, $u, w, t$, is now connected to $v$ via $w$ 
and is therefore at distance two less than before. Finally, the remaining edges of $S$, now all connected
to the exceptional vertex via $w, t$ instead of $u,w$, remain at exactly the same distance that they had before. 
 
~

\item   In any complete mutation reduction, each branch is eventually split entirely into separate leaves 
attached to the the exceptional vertex.  However, since this is always done, as described above, by separating 
one branch into two adjacent branches with the same labels as the original branch, the end result is that 
all the edges of in the original branch correspond to an interval around the star.
\end{enumerate}
\end{proof}

\noindent We will examine  two different mutation reduction algorithms, one 
a version of the original algorithm given by Aihara \cite{Ai}and  the other  our own from \cite{Z}.

~

\noindent \uline{Aihara's Algorithm}\cite{Ai} 
\begin{enumerate}
\item Choose an initial branch.  

\item In a Green's walk starting at the root of the  the initial branch choose the first   primary edge
$w$ attached to  an edge adjacent to the exceptional vertex.  If the tree is not a star, there must
be such an edge. 

\item By Lemma \ref{branch}(1), the mutation centered on this edge $w$ is a mutation
reduction, and from the proof we see that it creates two adjacent branches from the original,
the first of which in counter-clockwise order is rooted at $w$.

\item If $w$ was on the initial branch, let the new initial branch be the new branch rooted at $w$, and otherwise
let the initial branch remain as before.  Begin again from 2.

\end{enumerate}

\noindent \uline{Algorithm Z}\cite{Z} 
\begin{enumerate}
\item Choose an initial branch.  Let $d>1$ be the maximal distance of a vertex from the exceptional vertex $v$.

\item In a Green's walk starting on the initial branch choose the first leaf at distance $d$, necessarily  a primary edge, as center, 
and perform a series of mutations centered on the edge with this label,
for as long as it remains a primary edge or until it reached the exceptional vertex. The distances of 
all other vertices from the exceptional vertex will be unchanged.

\item Choose the next leaf at distance $d$ and proceed as in the previous item.
\item When there are no leaves left at distance $d$, then we find the new maximal distance $d'$.  If $d'=1$ we are 
finished, and otherwise we set $d=d'$ begin again at 2.
\end{enumerate}

\noindent  In terms of number of steps, this is as inefficient as a mutation reduction algorithm can be, 
because at each step, only one edge has its distance reduced by one.

~

\noindent We now construct a numbering on a Brauer tree depending
on which algorithm we use, by starting with the interval coming from the chosen initial branch
and numbering the edges in order.  This numbering
will be called the \textit{natural numbering} corresponding to this
choice of initial branch.

\noindent \begin{figure*}[ht]
\centering
\includegraphics[scale=0.1]{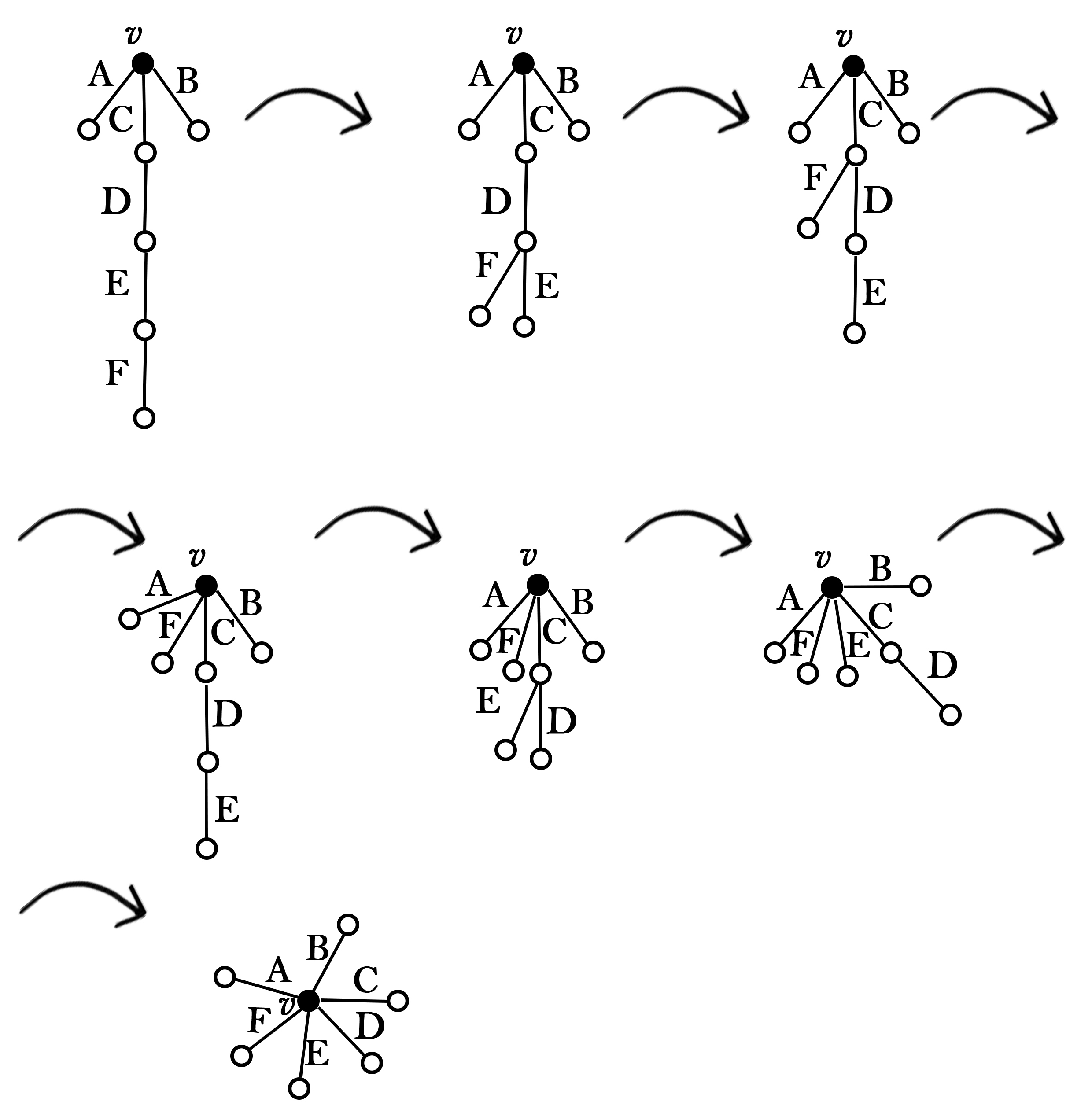}
\captionsetup{labelformat=empty}
\caption{Figure 3}
\end{figure*}

\noindent \begin{examp}

\noindent In Figure 3, we got to the Brauer star using Algorithm Z. Taking the largest 
branch as initial branch, we get as natural numbering

\noindent 
\[
A=6, B=5, C=4, D=3, E=2, F=1
\]
\end{examp}
~

\begin{figure*}[ht]
\centering
\includegraphics[scale=0.07]{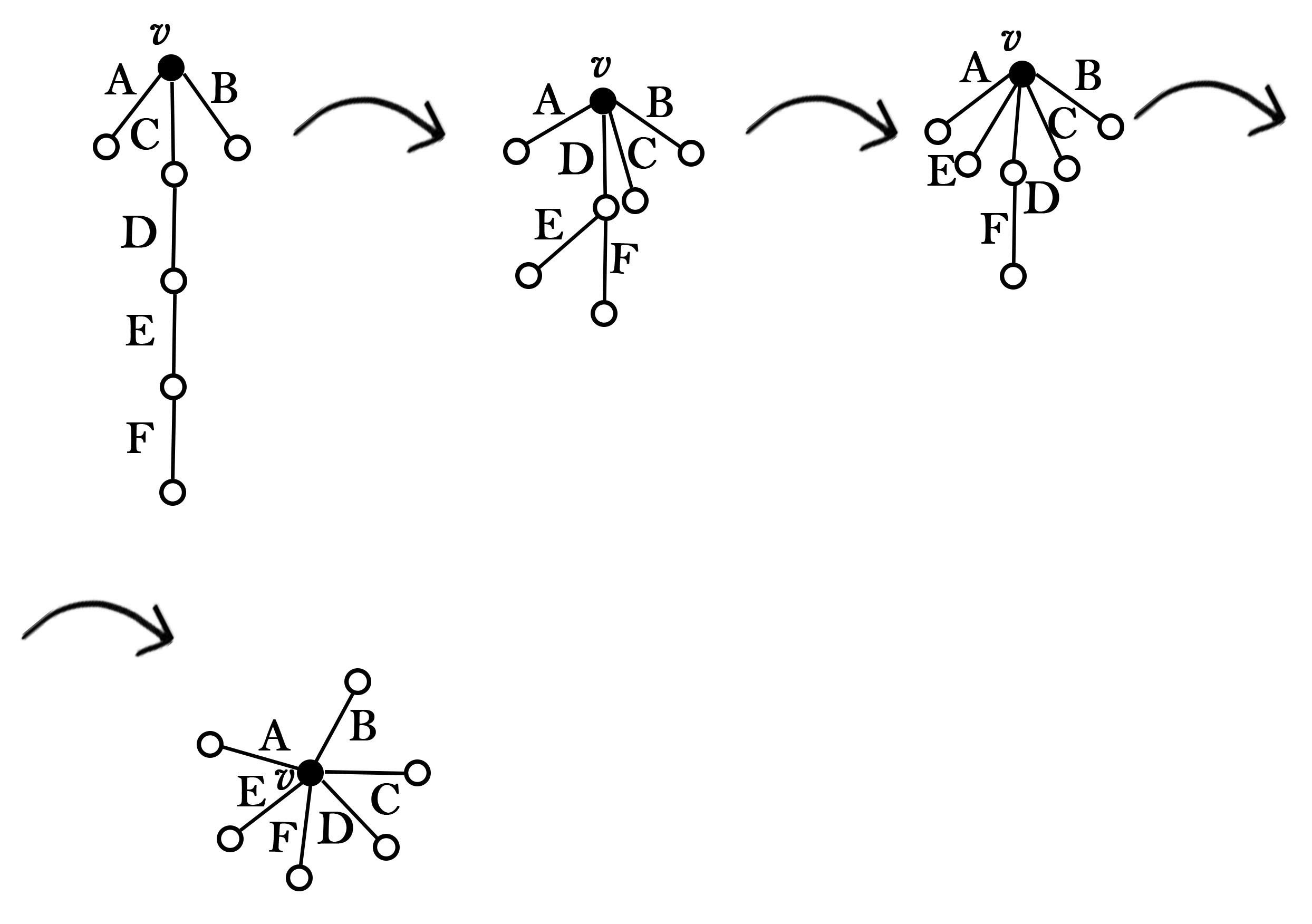}
\captionsetup{labelformat=empty}
\caption{Figure 4}
\end{figure*}

~
\noindent \begin{figure*}[!ht]
\centering
\includegraphics[scale=0.07]{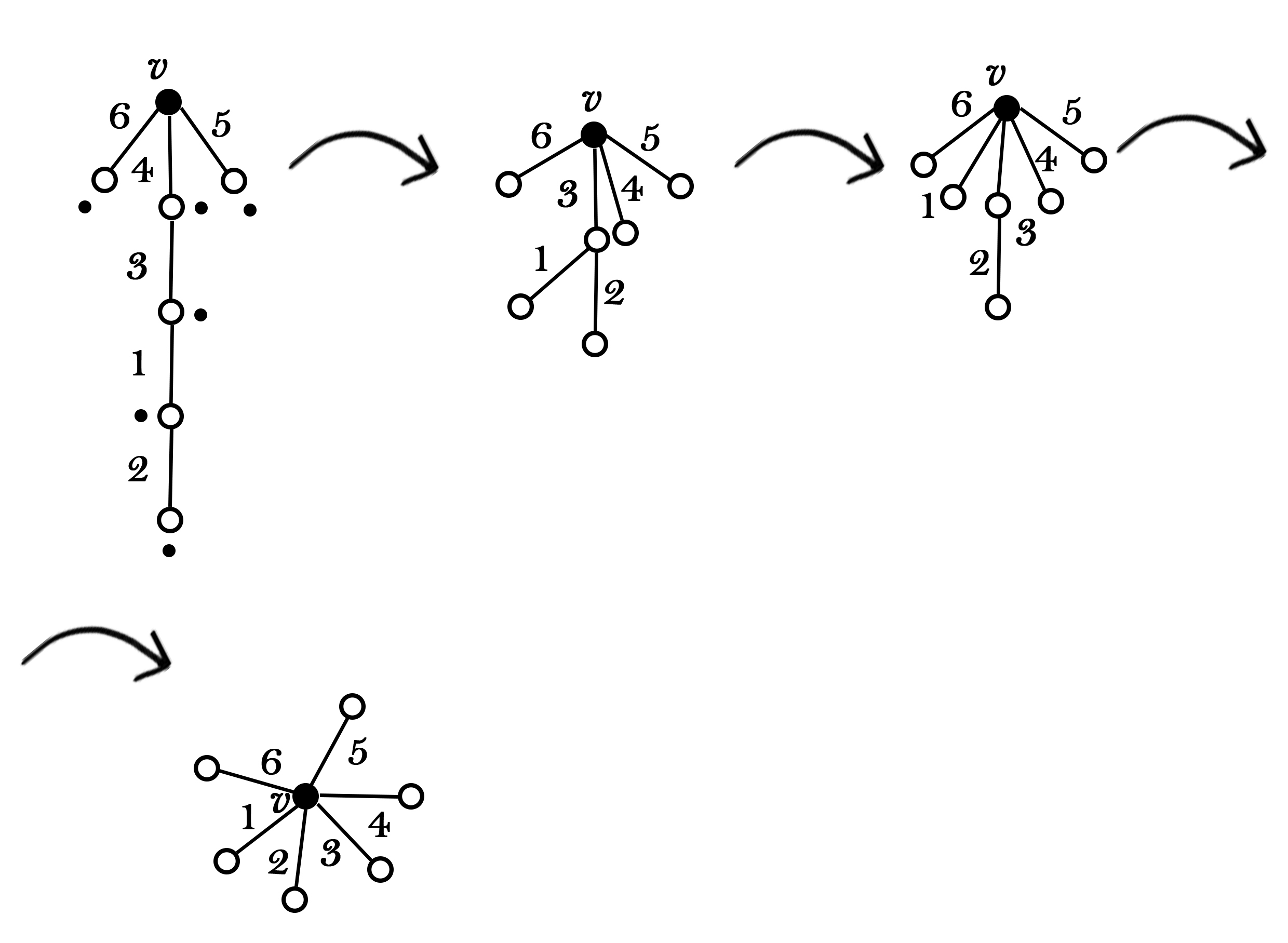}
\captionsetup{labelformat=empty}
\caption{Figure 5}
\end{figure*}
\noindent  \begin{examp}  Now, we follow Aihara's Algorithm  for the same original Brauer tree. 
 In Figure 4, using the same initial branch, we get a different natural numbering.

\noindent 
\[
A=6, B=5, C=4, D=3, E=1, F=2
\]

\end{examp}

~

\noindent  
The numbering gives a pointing and this pointing gives us a corresponding
star-to-tree tilting complex as described in \S 2. We will prove that 
from  Algorithm Z we get a tilting complex corresponding to  the natural numbering. From Aihara's Algorithm we obtain a tilting
complex which comes from a pointing but we will show that it is not usually  the pointing
corresponding to the natural numbering.

\section{MAIN THEOREM}

\noindent The basic step in Algorithm Z is to take a leaf $C$ which is a 
primary edge in a Brauer tree giving an algebra  $A^{''}$ and to do a mutation centered on this leaf, which will
be a mutation reduction to a tree whose algebra is  $A^{'}$.  The tilting complex of this 
mutation is expressed in terms of the projectives $P_i^{''}$ of $A^{''}$, given
by a functor $G:D^b(A^{'}) \rightarrow  D^b(A^{''})$.  Since $C$ is a 
primary edge attached to some edge $B$ which is closer to the exceptional vertex,
 the functor $G$ will act as the identity for every 
projective $P_i^{'}$ of $A^{'}$  except $P_C^{'}$, and for $C$ itself we will have
the projective cover of its radical, which is $P_B^{'}$ because $C$ is a primary edge adjacent to $B$.

\noindent \begin{center}
$\begin{array}{cccccccccccc}
G\left(P_{i}^{'}\right): &  &  &  &   & 0 & \rightarrow & P_{i}^{''} & \rightarrow & 0\\
G\left(P_{C}^{'}\right): &  &    &  &  & 0 & \rightarrow & P_{B}^{''} & \rightarrow  & P_{C}^{''} & \rightarrow & 0
\end{array}$
\par\end{center}

\noindent \begin{thm}\label{Alg} For any complete mutation reduction
whose centers are always leaves which are primary edges, the 
star-to-tree tilting complex of the composed mutations is the 
star-to-tree complex of the original tree with the reversed
pointing.
\end{thm}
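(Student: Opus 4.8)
The plan is to argue by induction on the number of mutation steps, exploiting the fact that each elementary step in Algorithm Z mutates a leaf which is a primary edge, so that the functor $G$ of such a step has the very simple form displayed just before the theorem: it is the identity on all $P_i^{'}$ except $P_C^{'}$, which it sends to the two-term complex $0\to P_B^{''}\to P_C^{''}\to 0$ (with $B$ the edge of the tree adjacent to $C$ closer to the exceptional vertex). First I would set up the base case: after a complete mutation reduction the tree is the Brauer star, and the star-to-star tilting complex associated to the trivial pointing is just the direct sum of stalk complexes, which is the identity object; this matches the statement since on a star every vertex is at distance $0$ or $1$ and the ``reversed pointing'' reduces to the trivial pointing. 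The inductive hypothesis is that after some tail of the reduction the composed functor carries the (star-indexed) stalk complexes to the two-restricted complex of the intermediate tree $A^{''}$ with its reversed pointing; I then compose with one more step $G:D^b(A^{'})\to D^b(A^{''})$ and must identify the result with the reversed-pointing complex of $A^{'}$.

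The heart of the argument is the composition of $G$ with the two-restricted complex of $A^{''}$. Since $G$ is the identity on every projective except $P_C^{''}$, the only components that change are those $T_i^{''}$ in which $P_C^{''}$ actually appears — by the two-restricted structure these are $T_i^{''}=T_{Ck}^{''}$ or $T_{kC}^{''}$ for various neighbours, i.e. the stalk complex on $C$ and the length-two complexes having $P_C^{''}$ as a term. Applying $G$ replaces each occurrence of $P_C^{''}$ by $G(P_C^{'})=\big(P_B^{''}\to P_C^{''}\big)$, producing a priori a length-three complex; I would then show that after cancelling a contractible summand $\big(P_B^{''}\!\to\!P_B^{''}\big)$ (this is exactly the mechanism used in the proof of Lemma~\ref{Ai}, via a mapping-cone/homotopy-equivalence computation) what remains is again a two-restricted complex, and that the indices and shifts that come out are precisely those dictated by the reversed pointing of $A^{'}$. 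The key geometric input is Lemma~\ref{branch}(2) together with the combinatorial description of Aihara's operation: detaching the leaf $C$ from $B$ and reattaching it at the far end of the edge preceding $B$ in the cyclic order changes, at exactly one vertex of the tree, which edge is the coprimary edge, and moves $C$ one position in the cyclic ordering there; correspondingly the socle map / identity map dichotomy from the short-versus-long lemma quoted at the end of \S2 flips exactly on the one component that must flip. I would organize this as: (a) list the $O(1)$ components of the $A^{''}$-complex touched by $G$; (b) compute $G$ on each and reduce modulo contractibles; (c) read off the new pointing data and check it equals the reversed pointing of $A^{'}$ by comparing Green's walks before and after the single reattachment.

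The main obstacle I anticipate is step (b)–(c): making the bookkeeping of shifts and of the ``point before/point after $i$'' distinction survive the cancellation of the contractible summand, and doing so uniformly over the two cases (whether $C$ is reattached to one vertex only, being a leaf, versus the ambient edge $B$ being itself near the exceptional vertex as in Lemma~\ref{branch}(2)). Concretely, one must verify that the degree $n_i$ in which each $P_i$ sits is unchanged for $i\neq C$ and correctly adjusted for $i=C$, and that the vertical maps of the resulting complex are still generators of the endomorphism ring — which is where the short/long sequence result is invoked to certify that the map landing in $P_C^{''}$ is an identity or a socle map exactly as the reversed pointing demands. A secondary but routine point is to confirm that the composite of all the steps is literally the two-restricted complex, not merely homotopy equivalent to it in some uncontrolled way; this follows because at each step the only non-stalk projective modified is $P_C$, so the homotopy equivalences can be chosen functorially and composed, exactly as in the final lines of the proof of Lemma~\ref{Ai}.
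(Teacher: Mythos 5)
Your skeleton --- induct on the number of mutations, peel off one mutation per step, collapse the resulting three-term complex by cancelling a contractible summand, and check the combinatorics of the pointing, with a separate case near the exceptional vertex --- is exactly the paper's strategy, and the ingredients you name (the short/long lemma, Lemma~\ref{branch}) are the right ones. The gap is in how you set up the inductive step. The theorem concerns the star-to-tree complex, i.e.\ the values $F_\ell^{-1}(P_i^{''})$ on the projectives of the \emph{new} tree, and $F_\ell^{-1}=F_{\ell-1}^{-1}\circ G^{-1}$. Since $G^{-1}$ fixes every $P_i^{''}$ with $i\neq C$ and sends $P_C^{''}$ to the two-term complex $P_C^{'}\rightarrow Q_C^{'}=P_j^{'}$ (with $j$ the next edge after $C$, which is your $B$), \emph{exactly one} component changes --- the one indexed by $C$ --- and it becomes $\mathrm{Cone}\bigl(F_{\ell-1}^{-1}(P_C^{'})\rightarrow F_{\ell-1}^{-1}(P_j^{'})\bigr)$: a cone over a chain map between two elementary complexes $T_{Ck}\rightarrow T_{jk}$ of \emph{star} projectives, $k$ being the entering edge of their common vertex. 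The second complex $T_{jk}$ need not contain $P_C$ at all; the three-term complex is $P_C\rightarrow P_k\oplus P_j\rightarrow P_k$; the contractible summand cancelled is $P_k\overset{id}{\rightarrow}P_k$; and to see that the cancellation works one must first identify the connecting chain map as $(h_{Cj},id)$, which is where the short/long lemma is actually used, via the inequality $C<j<k$ supplied by the reversed pointing.

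Your version instead substitutes $G(P_C^{'})=(P_B^{''}\rightarrow P_C^{''})$ for each occurrence of $P_C$ inside ``the components in which $P_C$ appears.'' That operation computes the opposite composite $G\circ F_{\ell-1}$, i.e.\ the tree-to-star direction, and there your structural claim fails: the components of the tree-to-star functor are the \emph{folded} Rickard complexes (direct sums of several projectives in each of two degrees, as in the example in \S 5), not elementary complexes $T_{Ck}$ or $T_{kC}$, and $P_C$ occurs in many of them, so the list of affected components, their claimed two-restricted form, and the single cancellation of $(P_B^{''}\rightarrow P_B^{''})$ are all incorrect in that direction; while in the star-to-tree direction the affected component is not obtained by replacing occurrences of $P_C$ at all. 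Reorienting the computation as above --- cone against the component of the next edge $j$, cancel $P_k\rightarrow P_k$, and verify that the surviving $T_{Cj}$ with its shift is what the reversed pointing prescribes in the two cases ($j$ attached or not attached to the exceptional vertex) --- turns your outline into the paper's proof.
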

\noindent \begin{proof} Let $A_\ell,A_{\ell-1},\dots,A_1$ be the Brauer tree algebras
 in the complete mutation reduction to the Brauer star algebra $A_0$. We number each of the 
corresponding Brauer trees by the natural numbering corresponding to this mutation reduction.
For each $k$ between $1$ and $\ell$, we let
\[F_k:D^b(A_0) \rightarrow D^b(A_k)
\] 
\[F_k^{-1}:D^b(A_k) \rightarrow D^b(A_0)
\]
be the functors obtained by composing the functors $G^{+}$ of the mutations $\mu^+$ and,
respectively,  the functors $G^{-}$ of the dual mutations $\mu^{-}$ in the opposite order.
We want to show that the star-to-tree tilting complex given by 
$F_\ell^{-1}$ is the star-to-tree complex given by the reversed 
pointing, from which it will follow by the results of \cite{RS} that 
the tilting complex inducing $F_\ell$ is Rickard's tree-to-star complex
for the same pointing.

~

\noindent Let us prove the theorem by induction on $\ell$.  If $\ell$ is $1$, then the Brauer tree has
only one edge $w$ not attached to the exceptional vertex $v$, but
rather to some $u$ attached to $v$.  If $w$ is numbered $i$ after mutation,
then $u$ will be numbered by $i+1$ since it comes after the new $w$ in the cyclic ordering of the star.
In the tilting complex of  $\mu_i^{-}$   we will have $Q_i=P_{i+1}$, and 
thus it coincides with the
star-to-tree complex of the reversed pointing.

~

\noindent Now assume that the theorem is true for $\ell-1$, so that 
$F_{\ell-1}^{-1}$ gives the star-to-tree complex of the reversed
pointing of the Brauer tree.  Let the $\{P_j^{'}\}$ be the projective
left modules of $A_{\ell-1}$ and let the $\{P_j^{''}\}$ be the projective
left modules of $A_{\ell}$. Let $i$ be the number of the center of 
the mutation in the Brauer tree of $A_{\ell-1}$ and let 
$j$ be the number of the next edge after it in the cyclic ordering.
By the rules for numbering edges at a vertex, we must have $j>i$.
Furthermore, since $i$ is a leaf, $P_i^{'}$ is uniserial, so the
injective hull $Q_i^{'}$ of $P_i^{'}/Soc(P_i^{'})$ is $P_j^{'}$.

\noindent \begin{center}
$\begin{array}{cccccccccccccccc}
G^{-}\left(P_{j}^{''}\right): & & &  & &   & 0 & \rightarrow & P_{j}^{'} & \rightarrow & 0, j \neq i\\
G^{-}\left(P_{i}^{''}\right): & &  &  & 0 & \rightarrow & P_{i}^{'} & \rightarrow  & P_{j}^{'} & \rightarrow & 0
\end{array}$
\par\end{center}

\noindent \textit{Case 1}. The edge $j$ is not attached to  the exceptional vertex:

\noindent \begin{figure*}[h]
\centering
\includegraphics[scale=0.21]{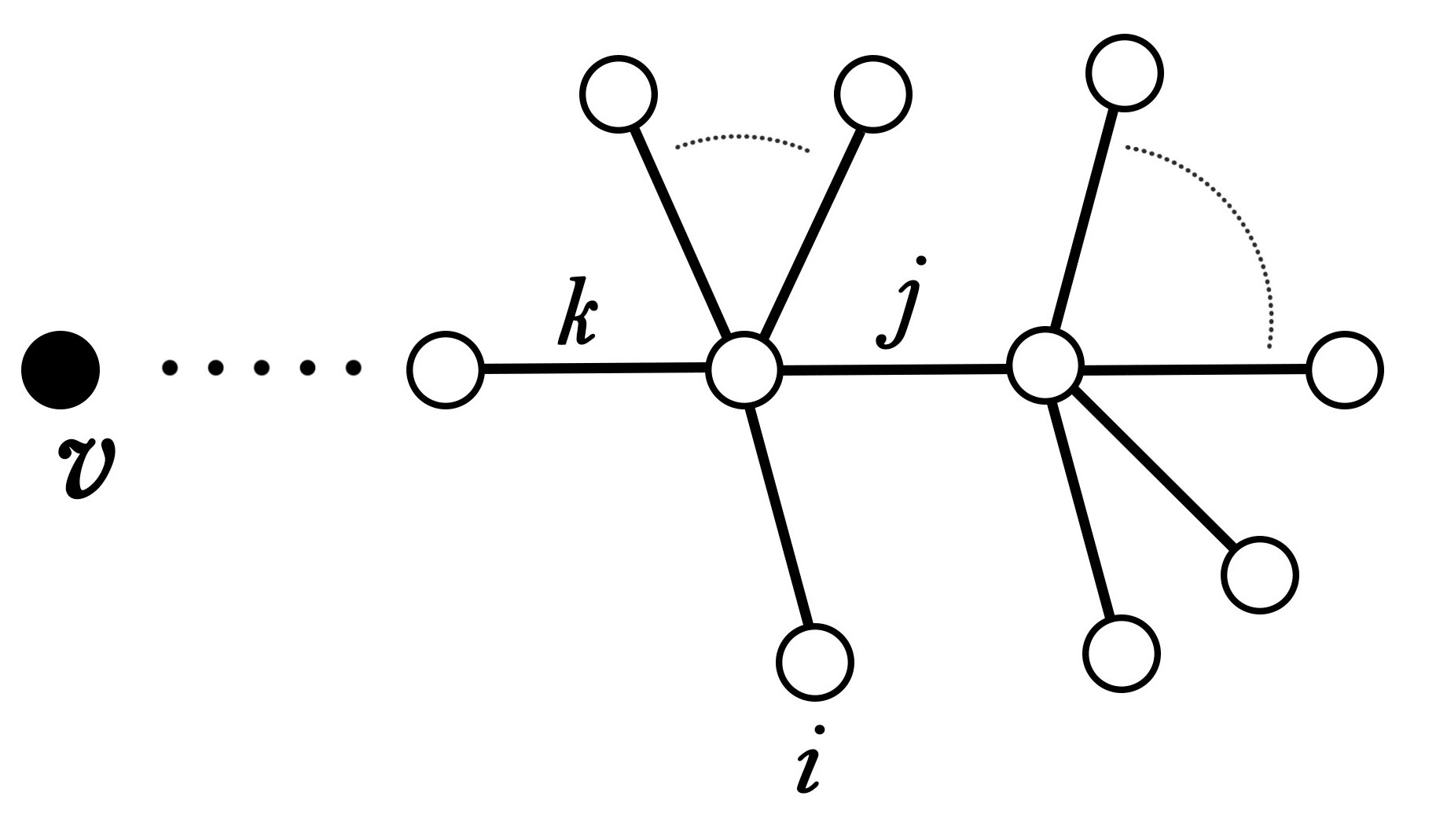}
\captionsetup{labelformat=empty}
\caption{Figure 6, $A_{\ell-1}$}
\end{figure*}

\noindent \begin{figure*}[ht]
\centering
\includegraphics[scale=0.21]{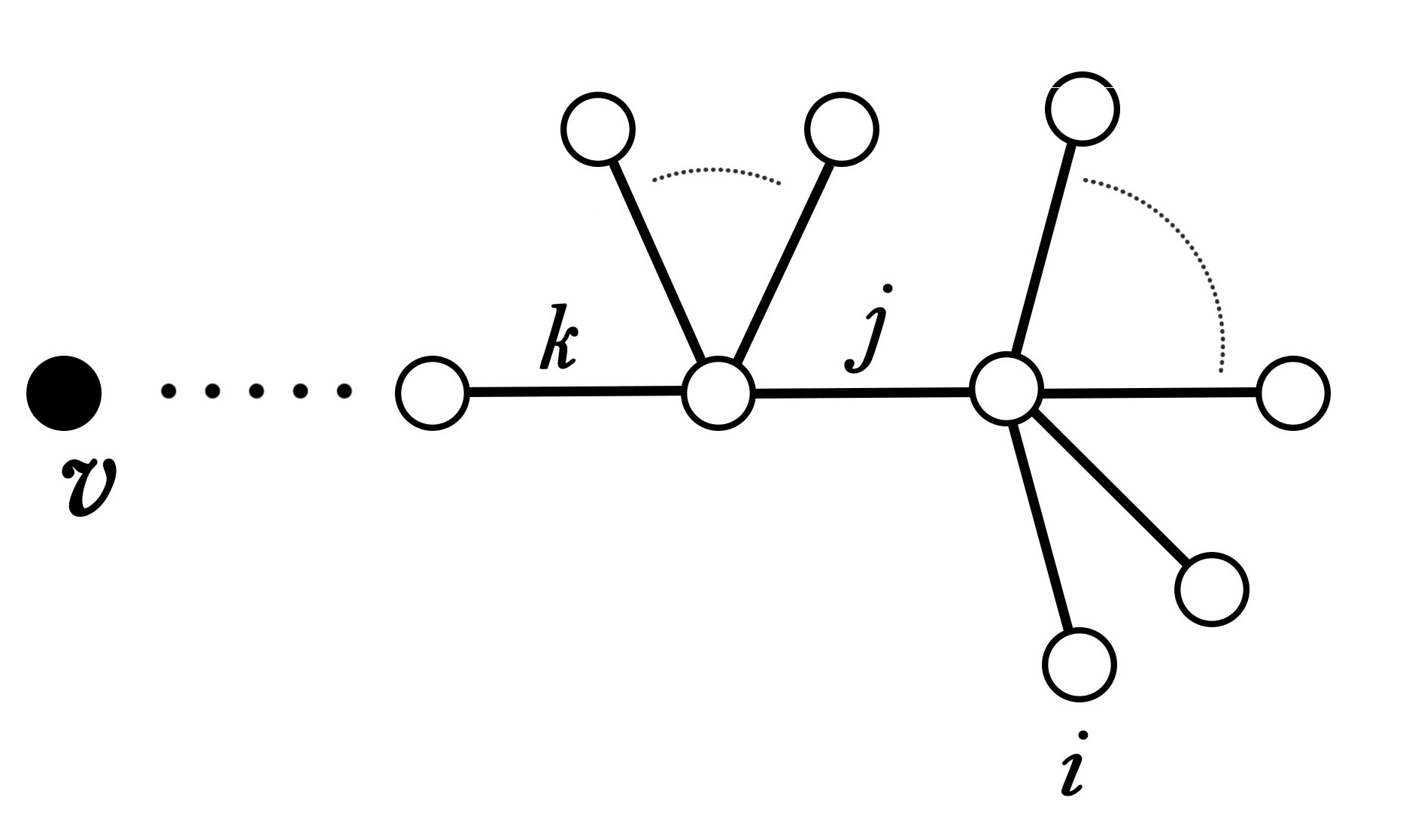}
\captionsetup{labelformat=empty}
\caption{Figure 7, $A_{\ell}$}
\end{figure*}
 
\noindent Let $k$ be the entering edge of the vertex at which $i$ and $j$ meet in the 
Brauer tree of $A_{\ell-1}$,as in Figure $6$, and assume that it is in degree $n_k$ in the tilting complex.
Then by the assumption of reversed pointing, we have $i<j<k$, and thus
$F_{\ell-1}^{-1}( P_i^{'}) =T_{ik}[n_k+1]$
 and $F_{\ell-1}^{-1}(P_j^{'})=T_{jk}[n_k+1]$.
 In this case we will get that the composition 
\begin{align*}
F_{\ell}^{-1}\left(P_{i}^{''}\right)&=F_{\ell-1}^{-1}\circ G^{-1}\left(P_{i}^{''}\right)\\
&=F_{\ell-1}^{-1}\left(Cone\left(P_{i}^{'}\rightarrow P_{j}^{'}\right)\right)\\
& =Cone\left(F_{\ell-1}^{-1}(P_{i}^{'})\rightarrow F_{\ell-1}^{-1}(P_{j}^{'})\right)
\end{align*}

\noindent \begin{equation*} = Cone \left(\!\!\!\!\!\!\!\!\!\!\!\!\!\!\!\! \vcenter{\xymatrix{ &0 \ar@{->}[r] & P_i \ar@{->}[r]^{h_{ik}} \ar@{->}[d]_{h_{ij}} & P_k \ar@{->}[r] \ar@{->}[d]^{id} & 0\\ &0 \ar@{->}[r] & P_j \ar@{->}[r]^{h_{jk}} & P_k \ar@{->}[r] & 0\\ }} \right) \end{equation*}

\noindent Denote the chain map in the cone by  $l^{\bullet}$.   We  compute $Cone(l^{\bullet})$ and  get:

\noindent \[ \xymatrix@C=60px{ & P_i \ar@{->}[r]^-{(-h_{ik}, h_{ij})} &P_k \oplus P_j \ar@{->}[r]^-{(\pi_{k}+h_{jk}\circ\pi_{j})} &P_k } \]

\noindent where $P_{k}$ is in degree 0. We want to show that $Cone(l^{\bullet})$
is homotopy equivalent to $T_{ij}[2]$, which is to say, $T_{ij}$
shifted so that the $P_{i}$ is in degree -2.

\noindent The chain maps $f_{\bullet}$ and $g_{\bullet}$ in the following diagram can be checked by composition or by diagram chasing.

\noindent \begin{equation*} 
\xymatrix@C=70px@R=30px{  {P_{i}} \ar@{->}[r]^-{(-h_{ik},h_{ij})} \ar@{->}[d]^-{id} 
& {P_k \oplus P_j}  \ar@{->}[r]^-{(\pi_{k}+h_{jk}\circ\pi_{j})}  \ar@{->}[d]^-{\pi_j} 
& {P_k} \ar@{->}[d]^-{0}\\ 
 {P_{i}} \ar@{->}[r]^-{h_{ij}} \ar@{->}[d]^-{id} & {P_{j}} \ar@{->}[r]^{} \ar@{->}[d]^-{(-h_{jk},id)} & {0} \ar@{->}[d]^-{0}\\  {P_{i}} \ar@{->}[r]^-{(-h_{ik},h_{ij})} & P_k \oplus P_j    \ar@{->}[r]^-{(\pi_{k}+h_{jk}\circ\pi_{j})}  & P_k  } \end{equation*}

The composition $f_\bullet \circ g_\bullet$ is the identity, so we need only prove that 
 the composition $h_{\bullet}=g_{\bullet}\circ f_{\bullet}$
 is homotopic to the identity of the mapping cone. We need to find $T_{1}:P_{k}\oplus P_{j}\rightarrow P_{i}$
, $T_{2}:P_{k}\rightarrow P_{k}\oplus P_{j}$ such that:

\noindent \[ \xymatrix@C=70px@R=50px{  P_i \ar@{->}[r]^-{(-h_{ik},h_{ij})}  \ar@<-0.5ex>@{->}[d]_{id} \ar@<0.5ex>@{->}[d]^-{id} & {P_k \oplus P_j} \ar@{->}[r]^-{(\pi_{k}+h_{jk}\circ\pi_{j})}  \ar@{->}[ld]_{T_1} \ar@<-0.5ex>@{->}[d]_-{(-h_{jk}\circ\pi_j,\pi_j)} \ar@<0.5ex>@{->}[d]^-{id} & {P_k} \ar@{->}[ld]_-{T_2} \ar@<-0.5ex>@{->}[d]_-{0} \ar@<0.5ex>@{->}[d]^-{id} \\  P_i \ar@{->}[r]^-{(-h_{ik},h_{ij})} & {P_k \oplus P_j} \ar@{->}[r]^-{(\pi_{k}+h_{jk}\circ\pi_{j})} & P_k \\ } \]

\noindent  To get the homotopy that we want, we choose $T_{1}=0$ and
 $T_{2}=(-id,0)$.

\noindent \textit{Case 2.} Near the exceptional vertex: $i$ is adjacent
to the exceptional vertex after doing the mutation. In Figure 8 we have the Brauer tree  for $A_{\ell}$
and in Figure 9 we have the relevant portion of the Brauer tree for $A_{\ell-1}$. 

\noindent \begin{figure*}[h]
\centering
\includegraphics[scale=0.15]{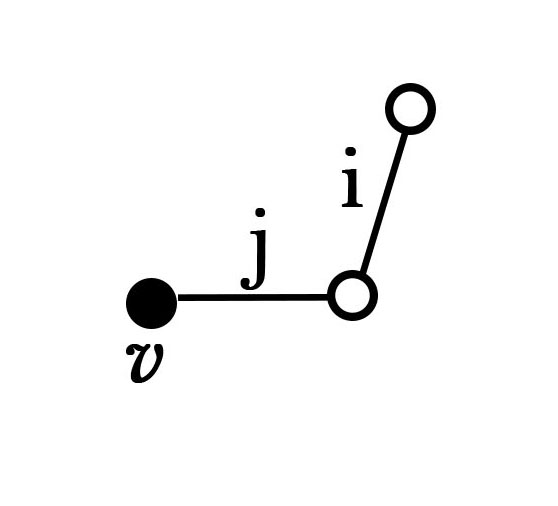}
\captionsetup{labelformat=empty}
\caption{Figure 8}
\end{figure*}

\noindent \begin{figure*}[h]
\centering
\includegraphics[scale=0.15]{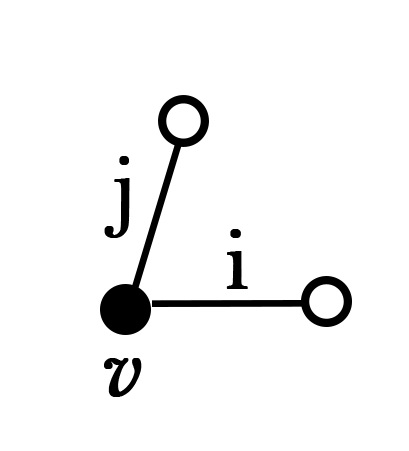}
\captionsetup{labelformat=empty}
\caption{Figure 9}
\end{figure*}

\noindent When we compute the tilting complex of the mutation, the
component of $P_{i}^{''}$ also is two-restricted. In this case we
will get that the composition \begin{equation*}F_{\ell-1}^{-1}\circ G^{-1}\left(P_{i}^{''}\right) = Cone \left(\!\!\!\!\!\!\!\!\!\!\!\!\!\!\!\! \vcenter{\xymatrix{ &0 \ar@{->}[r] & P_i \ar@{->}[d]_-{h_{ij}} \ar@{->}[r]  & 0\\ &0 \ar@{->}[r] & P_j \ar@{->}[r] & 0\\ }} \right) \end{equation*}

\noindent which is homotopy equivalent to $T_{ij}[1]$ , and
equal to $F_{\ell}^{-1}\left(P_{i}^{''}\right)$, which is precisely what we need for the 
star-to-tree tilting complex of the reversed pointing.

\end{proof}

\section{AIHARA'S ALGORITHM}

\noindent  In Cor. 2.6 of \cite{Ai}, Aihara shows that the tree-to-star functor obtained 
by composing the mutations is of length two.  We will compare
Aihara's functor with the completely folded two-term version of 
Rickard's tree-to-star functor given in \cite{RS}.

\noindent \begin{prop}\label{Alg2} We consider an arbitrary Brauer tree algebra.  Let $\sigma$ be the permutation of $1,\dots,e$ sending each number 
in the natural numbering of the tree by Aihara's Algorithm to the number of the corresponding edge in the left alternating numbering. Then
\begin{enumerate}
\item The star-to-tree complex obtained by composing the mutations of the algorithm in reverse order  can be obtained from the star-to-tree complex of the left alternating pointing 
 by 
permuting the rows by $\sigma$.
\item The tree-to-star complex corresponding the Aihara's Algorithm is the completely folded Rickard tree-to-star complex for the
left alternating pointing, except that the projectives are permuted by $\sigma$. 
\end{enumerate}
\end{prop}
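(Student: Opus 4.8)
The plan is to run the same kind of inductive argument used for Theorem~\ref{Alg}, but now tracking two pieces of bookkeeping simultaneously: the reattachment combinatorics of Aihara's mutations and the comparison with the left alternating pointing. First I would recall from \cite{RS} that the star-to-tree tilting complex attached to a pointing, and the completely folded Rickard tree-to-star complex for that same pointing, are inverse (up to the known duality between $\mu^+$ and $\mu^-$, Lemma~\ref{Ai}); hence statements (1) and (2) are equivalent, and it suffices to prove (1). So the real content is: composing the $\mu^-$-functors of Aihara's Algorithm in reverse order produces, as its image of the stalk complexes $P_i$ of the Brauer star, exactly the components $T_{ij}[-n_i]$ or $T_{ji}[-n_i+1]$ of the left alternating star-to-tree complex, but indexed according to the natural Aihara numbering rather than the left alternating numbering, i.e.\ row $i$ of one complex equals row $\sigma(i)$ of the other.

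The induction is on $\ell$, the number of mutations. As in Theorem~\ref{Alg}, each step mutates at a primary edge $w$ that is attached to an edge $u$ adjacent to the exceptional vertex; by Lemma~\ref{branch}(2) this is a mutation reduction, and the key geometric fact (from the proof of Lemma~\ref{branch}(2)) is that the branch rooted at $u$ splits into two adjacent branches, the first rooted at $w$. The crucial difference from Algorithm Z is \emph{which} edge plays the role of $j$ = ``next edge in the cyclic ordering after the center'': for a leaf center we always had $j > i$ and $Q_i = P_j$ was forced, giving the reversed pointing; for Aihara's algorithm the center $w$ is in general not a leaf, so $Q_w$ is the sum of (up to) two projectives, corresponding to the two edges to which $w$ reattaches, and the alternation of the point — left then right, as one moves out from the exceptional vertex — comes precisely from the fact that in the split the new branch rooted at $w$ is placed \emph{before} $u$ while the leftover branch rooted at $u$ is placed \emph{after}. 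I would make this precise by showing that after the step, the point at the vertex where $w$ meets $u$ in $A_{\ell-1}$ lands on the side dictated by the parity of the distance from $v$, which is exactly the left alternating rule. The cone computations are then the same two cases as in Theorem~\ref{Alg} (edge $j$ not at the exceptional vertex; edge $j$ at the exceptional vertex), and the homotopy equivalences $Cone(l^\bullet) \simeq T_{ij}[2]$ resp.\ $T_{ij}[1]$ carry over verbatim, with the same explicit homotopies $T_1 = 0$, $T_2 = (-id,0)$.

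The remaining work is purely combinatorial: one must verify that the permutation $\sigma$ is well defined and is exactly the relabeling that intertwines the two complexes. For this I would argue that both numberings are obtained by a Green's walk from the chosen initial branch, that Lemma~\ref{branch}(3) guarantees the edges of each original branch occupy a common interval of the star in \emph{both} numberings (so $\sigma$ permutes within each such interval), and that within a branch the Aihara natural numbering reverses the order of each ``leftover tail'' relative to the left alternating order — which is what produces the $E \leftrightarrow F$ swap visible in the Figure~4 example. I would phrase this as: at each split, the two new branches inherit the correct sub-intervals, and induction on the branch structure finishes it. The main obstacle I anticipate is not any single homotopy but getting the combinatorial dictionary exactly right — matching the side of the point (hence $T_{ij}$ vs.\ $T_{ji}$ and the degree shift $n_i$ vs.\ $n_i - 1$) to the parity governing the left alternating pointing, uniformly over both cases and through the induction; a careful statement of what ``row $\sigma(i)$'' means when $w$ is an internal (non-leaf) primary edge, so that $G^-(P_w)$ genuinely has the two-restricted shape $P_w \to Q_w$ with $Q_w$ a \emph{single} projective $P_j$ in the relevant degree, is where I would spend the most care.
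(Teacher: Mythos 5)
Your overall architecture (induct on $\ell$, prove (1) and deduce (2) from the inverse relationship in \cite{RS}, track the permutation branch-by-branch using Lemma~\ref{branch}(3)) matches the paper's proof. But there is a genuine gap at the heart of the inductive step: you assert that the cone computations of Theorem~\ref{Alg} ``carry over verbatim,'' and you hope to arrange matters so that $G^-(P_w)$ has the shape $P_w \to Q_w$ with $Q_w$ a \emph{single} projective. That is exactly what fails for Aihara's Algorithm. The mutation center $w$ is a primary edge attached to an edge at the exceptional vertex, but it is generally \emph{not} a leaf, so $Q_w$ is the direct sum of \emph{two} projectives: in the paper's notation, $G^{-1}(P_w'') = Cone\bigl(P_w' \to P_u' \oplus P_t'\bigr)$, where $u$ is the root of the next branch and $t$ is the primary edge at the far end of $w$. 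Worse, the induction hypothesis does not send both summands to stalk complexes: $F_{\ell-1}^{-1}(P_u')$ and $F_{\ell-1}^{-1}(P_w')$ are stalks, but $F_{\ell-1}^{-1}(P_t')$ is already a two-term complex $0 \to P_i \to P_w \to 0$. The resulting cone is a complex $P_w \oplus P_i \to P_u \oplus P_w$, and collapsing it to the two-restricted complex $T_{iu}$ requires a new homotopy lemma — the paper proves that for a short sequence $i \to j \to k$ the complex $P_j \oplus P_i \to P_k \oplus P_j$ is homotopy equivalent to $T_{ik}[-1]$, with homotopy $T = (-\pi_j,0)$, and must check that $i \to w \to u$ is short because $i \le t < w < u$. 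None of this is supplied by, or reducible to, the leaf-centered computation of Theorem~\ref{Alg} with $T_1=0$, $T_2=(-id,0)$.

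A second, smaller gap is the identification of $\sigma$. Your heuristic that the Aihara numbering ``reverses the order of each leftover tail'' relative to the left alternating order is not established and does not match what actually happens: the paper shows $\sigma_\ell$ differs from $\sigma_{\ell-1}$ only on the two edges $w$ and $t$, with $\sigma_\ell(w)=\sigma_{\ell-1}(t)=i$ and $\sigma_\ell(t)=\sigma_{\ell-1}(w)=w$, and the resulting permutation is built from \emph{cyclic} orderings at vertices of even distance from the exceptional vertex (the paper's subsequent corollary), not from order reversals. To close the argument you would need to exhibit this explicit one-step update of $\sigma$ and verify, by comparing the four changed rows of $F_\ell^{-1}$ and $H_\ell^{-1}$, that the left alternating pointing of the joined tree assigns $i$ to $w$ and $w$ to $t$ while fixing everything else.
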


\begin{proof} 
\begin{enumerate}
\item  We let  $\ell$ by the number of mutations in the complete mutation reduction. In the case $\ell = 1$, the center of the mutation is a leaf, so the natural numbering the the reversed numbering, and for a linear tree of length $2$, this coincides with the left alternating numbering, so the permutation is the identity.
We let $F^{-1}_\ell$ be the star-to-tree functor obtained by composing the inverse mutations, and let $H^{-1}_\ell$ be the star-to-tree functor given by the 
left alternating numbering, with $\sigma_\ell$ the permutation mapping the natural number of an edge to its number in the left alternating numbering.

We assume, by induction, that the proposition is true for $\ell-1$. Let the $P_i^{'}$ be the projectives for $\ell-1$ in the natural numbering, and  
let  $P_i^{''}$ be the projectives for $\ell$ in the natural numbering.  Let the $Q_i^{'}$ be the projectives for $\ell-1$ in the left alternating numbering, and  
let  $Q_i^{''}$ be the projectives for $\ell$ in the left alternatingl numbering. By our induction hypothesis, this means that for every $i, 1 \leq i \leq e$, 
\[
F^{-1}_{\ell-1}(P^{'}_i)=H^{-1}_{\ell-1}(Q^{'}_{\sigma_{\ell-1}(i)})
\]

For any branch, let the edge connected to the exceptional vertex be called the root. We are now going to perform an inverse mutation centered at a root $w$, which will join the branch with root $w$ to the next branch, with root $u$, where, as stated in Lemma \ref{branch}(3), we have $u>w$. Let $B_{\ell-1}$ be the Brauer tree before the branches rooted at $u$ and $w$ are joined, and let $B_\ell$ be the Brauer tree after they are joined. We let $t$ be the primary edge connected to $w$, and the same result shows that $w$ is the numerically highest number in the branch, so that  $w>t$. We note that  for the left alternating numbering the number of the root is also the highest in the branch, so that the permutation $\sigma$ always acts as the identity on roots.

 We compute the functor $G^{-1}$, the inverse of the mutation $\mu_w^{+}$  centered at $w$.

\noindent \begin{center}
$\begin{array}{cccccccccccccccc}
G^{-1}\left(P_{s}^{''}\right): & & &  & &   & 0 & \rightarrow & P_{s}^{'} & \rightarrow & 0, s \neq w\\
G^{-1}\left(P_{w}^{''}\right): & &  &  & 0 & \rightarrow & P_{w}^{'} & \rightarrow  & P_{u}^{'} \oplus  P_{t}^{'} & \rightarrow & 0
\end{array}$
\par\end{center}

Since $w$ and $u$ are both roots, we have

\noindent \begin{center}
$\begin{array}{cccccccccccccccc}
F_{\ell-1}^{-1}\left(P_{u}^{'}\right)=H_{\ell-1}^{-1}\left(P_{u}^{'}\right): & & &  & &   & 0 & \rightarrow & P_{u}^{} & \rightarrow & 0\\
F_{\ell-1}^{-1}\left(P_{w}^{'}\right)=H_{\ell-1}^{-1}\left(P_{w}^{'}\right):& & &  & &   & 0 & \rightarrow & P_{w}^{} & \rightarrow & 0
\end{array}$
\par\end{center}

It remains to calculate $F_{\ell-1}^{-1}\left(P_{t}^{'}\right)$. By the definition of the alternating pointing, we get $\sigma_{\ell-1}(t)=i$, where, $i$ is the lowest number in the branch rooted at $w$, and by the definition of the star-to-tree tilting complex of a given numbering, we have

\noindent \begin{center}
$\begin{array}{cccccccccccccccc}
 F_{\ell-1}^{-1}\left(P_{t}^{'}\right)= & &  &  & 0 & \rightarrow & P_{i}^{} & \rightarrow  & P_{w}^{}  & \rightarrow & 0
\end{array}$
\par\end{center}

We now calculate $F_{\ell}^{-1}$ as the composition $F_{\ell-1}^{-1}\circ G^{-1}$. W first make a general claim that if $i\rightarrow j \rightarrow k$ is short, then 
$ {P_{j} \oplus P_{i}}   \rightarrow  {P_k \oplus P_{j}}$ is homotopy equivalent to $T_{ik}[-1]$, where the map is given by $(-h_{jk}\circ\pi_{j},\pi_{j}+h_{ij}\circ\pi_{i})$.  The chain maps are obvious and the composition from  $T_{ik}$ to itself is the identity, so we need only find a homotopy from the opposite composition to the identity:

\noindent \begin{equation*}
\xymatrix@C=90px@R=30px{ 
 {P_{j} \oplus P_{i}} \ar@{->}[r]^-{(-h_{jk}\circ\pi_{j},\pi_{j}+h_{ij}\circ\pi_{i})} \ar@{->}[d]^{\pi_{i}}  
& {P_k \oplus P_{j}} \ar@{->}[d]^{\pi_{k}+h_{jk}\circ\pi_{j}}\\
 {P_i}\ar@{->}[d]^-{(-h_{ij},id)} \ar@{->}[r]^-{h_{ik}}
& {P_k}\ar@{->}[d]^-{(id,0)}  \\
 {P_{j} \oplus P_{i}} \ar@{->}[r]^-{(-h_{jk}\circ\pi_{j},\pi_{j}+h_{ij}\circ\pi_{i})} 
& {P_k \oplus P_{j}} }
\end{equation*}

 The needed homotopy is given by $T=(-\pi_{j},0)$.  With this result in hand, and noting that $i \rightarrow w \rightarrow u$ is short because $i \leq t <w <u$,  we make our calculation.

\begin{align*}
F_{l-1}^{-1}\circ G^{-1}\left(P_{w}^{''}\right)&=F_{l-1}^{-1}\left(Cone\left(P_{w}^{'}\rightarrow P_{u}^{'} \oplus P_{t}^{'}\right)\right) \\
&=Cone\left(F_{l-1}^{-1}(P_{w}^{'})\rightarrow F_{l-1}^{-1}(P_{u}^{'})\oplus F_{l-1}^{-1}(P_{t}^{'})\right) \\
&=Cone \left(\!\!\!\!\!\!\!\!\!\!\!\!\!\!\!\! \vcenter{\xymatrix{ & 0 \ar@{->}[r] & P_w \ar@{->}[r] \ar@{->}[d]^-{(h_{wu},id)}  & 0 \\ & P_t \ar@{->}[r]^-{(0,h_{tw})}  & P_u \oplus P_w \ar@{->}[r]&0 }} \right) \\
&= P_w \oplus P_i \to P_u \oplus P_w\\
& \equiv P_i \overset{h_{iu}} \longrightarrow P_u 
\end{align*}

The resulting star-to-tree complex is clearly in $TC_2$.  Because $G^{-1}$ is almost everywhere trivial, the only differences between $F_\ell$ and 
$F_{\ell-1}$ are:

\noindent \begin{center}
$\begin{array}{cccccccccccccccc}
 F_{\ell-1}^{-1}\left(P_{w}^{'}\right)= H_{\ell-1}^{-1}\left(Q_{w}^{'}\right)= & &  &  & & & 0 &  \rightarrow  & P_{w}^{}  & \rightarrow & 0\\
 F_{\ell}^{-1}\left(P_{w}^{''}\right)= H_{\ell}^{-1}\left(Q_{i}^{''}\right)= & &  &  & 0 & \rightarrow & P_{i}^{} & \rightarrow  & P_{u}^{}  & \rightarrow & 0\\
 F_{\ell-1}^{-1}\left(P_{t}^{'}\right)= H_{\ell-1}^{-1}\left(Q_{i}^{'}\right)= & &  &  & 0 & \rightarrow & P_{i}^{} & \rightarrow  & P_{w}^{}  & \rightarrow & 0\\
 F_{\ell}^{-1}\left(P_{t}^{''}\right)= H_{\ell}^{-1}\left(Q_{w}^{''}\right)= & &  &  & 0 & \rightarrow & P_{i}^{} & \rightarrow  & P_{w}^{}  & \rightarrow & 0\\
\end{array}$
\par\end{center}

Thus $\sigma_\ell$ is identical with $\sigma_{\ell-1}$ except on $w$ and $t$.  We have $\sigma_\ell(w)=\sigma_{\ell-1}(t)=i$ and 
 $\sigma_\ell(t)=\sigma_{\ell-1}(w)=w$.  It remains only to show the $H_\ell$ is indeed the star-to-tree functor for the left alternating pointing.  

\noindent In $B_{\ell-1}$, let $T$ be the collection of branches at the far end of $t$, let $W$ be the remaining branches connected to $w$, and let $U$ be the collection of
 branches at the far end of $u$. The interval in the natural numbering corresponding to $T$  is $[i,\dots,t-1]$, the interval corresponding to 
$W$ is $[t+1,\dots,w-1]$, and the interval corresponding to $U$ is $[w+1,\dots,u-1]$.  In $B_\ell$, $w$ is attached to $u$ before $U$, $t$ has become coprimary,
 with $W$ attached to its far end, and $T$ is now attached directly to $w$.  The distances of $U$ and of $T$ from the exceptional vertex $v$ remain as they were, and the distance of $W$ is increased by two, since it was originally attached directly to $w$, and now $u$, $w$ and $t$ intervene.  Since the intervals remain the same, and the alternating pointing remains the same, the left alternating numbering for each is the same, and thus $\sigma_\ell$ and $\sigma_{\ell-1}$ are identical on $U$, $W$ and $T$, and also on $u$, where both are fixed.

\noindent  Thus, for every vertex except $w$ and $t$, the left alternating pointing is exactly as it was. At the far end of $w$, the point is on the right, and thus the left alternating pointing assigns to $w$ the lowest number in the united branches, which was the lowest number in the branch with root $w$, which we called $i$.  In the united tree  $B_\ell$, the edge $t$ is coprimary going out from $w$ and has the point between it and $w$.  Since the edge $w$ has been assigned a low number, the alternating numbering assigns to $t$ the largest number in the branch originally rooted at $w$, which is $w$. The alternating numbering then gives to every other edge exactly the same number as before.  This proves 1.

\item By \cite{RS} the Rickard tree-to-star complex for the left alternating pointing is the inverse of the star-to-tree complex for the same pointing.  Since the star-to-tree for Aihara's algorithm differs from the star-to-tree for the left alternating pointing only in the order of the components, the Aihara complex differs from the Rickard complex only by the same  permutation of the projectives.

\end{enumerate}
\end{proof}

\begin{cor}
The permutation $\sigma$ is given by the cyclic ordering of edges on the vertices at non-zero even distance from the exceptional vertex.
\end{cor}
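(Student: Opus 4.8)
The plan is to deduce this from the inductive analysis already carried out in the proof of Proposition~\ref{Alg2}. First I would make the statement precise. Regard $\sigma$ as a permutation of the set of edges: this is unambiguous, because conjugating $\sigma=m\circ n^{-1}$ (with $n$ the natural numbering and $m$ the left alternating numbering) by either $n$ or $m$ gives the same permutation $n^{-1}\circ m$ of the edges. The assertion is then that $\sigma$ equals the product, over all vertices $y$ at even distance $\ge 2$ from the exceptional vertex, of the cycle $c_y$ sending each edge at $y$ to the next edge at $y$ in the counterclockwise ordering there. Every non-root edge is incident to exactly one vertex at even distance $\ge 2$ (the two endpoints have consecutive distances, one of which is even, and that one is $\ge 2$ iff the edge is not a root) and every root to none, so these cycles have disjoint supports, the product is well defined, and the formula forces $\sigma$ to fix the roots, in agreement with the remark in the proof of Proposition~\ref{Alg2}.

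I would then induct on the number $\ell$ of mutations in the complete reduction, exactly as in Proposition~\ref{Alg2}. For $\ell=1$ the single non-root edge is a leaf, so its far endpoint is the only vertex at even distance $\ge 2$ and carries just that edge; the product is the identity, matching $\sigma=\mathrm{id}$ from the base case there. For the inductive step, recall from that proof that $B_\ell$ arises from $B_{\ell-1}$ by joining the branch rooted at a root $w$ to the next branch (rooted at $u$), with $t$ the primary edge at the far endpoint of $w$, and that this changes the permutation of edges by postcomposition with the transposition $(w\,t)$. From the same description one reads off how the vertices at even distance $\ge 2$ change: each keeps its incident edges and its cyclic order, except that the far endpoint $q$ of $t$ --- at distance $2$ in $B_{\ell-1}$, with cyclic order $(t,\tau_1,\dots,\tau_c)$ where $\tau_1,\dots,\tau_c$ are the roots of the subtrees hanging below $q$ --- is replaced in $B_\ell$ by the far endpoint $p$ of $w$, now at distance $2$, with cyclic order $(w,\tau_1,\dots,\tau_c,t)$, since those subtrees become attached directly below $w$ and $t$ becomes coprimary there. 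Writing the induction hypothesis as $\sigma_{B_{\ell-1}}=c_q^{B_{\ell-1}}\cdot R$ with $R$ the product of the unchanged cycles, and noting that neither $w$ nor $t$ lies in the support of any factor of $R$ (so $R$ commutes with $(w\,t)$), it suffices to check $c_q^{B_{\ell-1}}\circ(w\,t)=c_p^{B_\ell}$, i.e. $(t\ \tau_1\ \cdots\ \tau_c)\circ(w\ t)=(w\ \tau_1\ \cdots\ \tau_c\ t)$, which is a one-line computation. Hence $\sigma_{B_\ell}=c_p^{B_\ell}\cdot R=\prod_y c_y$ over the even-distance vertices of $B_\ell$.

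The step that needs care is the middle one: pinning down exactly which vertices at even distance $\ge 2$, and which cyclic orders, survive a single inverse-mutation step unchanged. In the notation of the proof of Proposition~\ref{Alg2} this amounts to tracking the four pieces $w$, $t$, $T$, $W$ and their distances (distances inside $T$ and outside the affected region unchanged, the distance of $W$ raised by two, that of $t$ raised by one) and using that a shift of distance by two preserves parity, so that $p$ and $q$ are the only vertices whose status or cyclic order can move. One must also ensure the $c_y$ come out counterclockwise rather than clockwise, but this is dictated by the orientation convention already fixed in the definition of the left alternating pointing and used throughout Proposition~\ref{Alg2}, so nothing new is needed. A more self-contained alternative, which I would relegate to a remark, is to note that the natural numbering is the post-order traversal of the tree (subtrees of children in counterclockwise order, then the edge) while the left alternating numbering is the traversal that is post-order at odd-distance vertices and pre-order at even-distance-$\ge 2$ vertices, and then to compare positions directly; that bookkeeping is heavier.
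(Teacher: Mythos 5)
Your proof is correct and takes essentially the same route as the paper's: an induction on $\ell$ riding on the bookkeeping of Proposition~\ref{Alg2}, in which the only cycle in the product that changes is the one at the relevant distance-two vertex, and it grows by precisely the transposition of $w$ and $t$ recorded there (your displayed identity $(t\,\tau_1\cdots\tau_c)\circ(w\,t)=(w\,\tau_1\cdots\tau_c\,t)$ is exactly the paper's ``increasing the length of the cycle by one''). Your added precision --- stating $\sigma$ as a product of disjoint cycles on edges, checking that every non-root edge lies at exactly one vertex of even distance $\ge 2$, and tracking the parity of the distance shifts of $T$, $W$ and $U$ --- tightens the published argument without changing its substance; only note that the operation you verify is precomposition, not postcomposition, with $(w\,t)$.
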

\begin{proof} As in the proof of the proposition, we do an induction on $\ell$, assuming that the result holds for $\ell-1$.  Thus in $\sigma_{\ell-1}$, the edges at the far end of $t$ are permuted according to the cyclic ordering at the vertex, from the primary edge $i$ through the numerically increasing starting edges of the branches in $T$, and  then to $t$, and finally from $t$ back to $i$.  In $B_\ell$, this vertex now has an extra edge.  The cyclic ordering goes from $i$ though the same sequence of starting edges in $T$, to the coprimary edge $t$, and finally to $w$.  This is precisely the change we documented in $\sigma_\ell$, where now $w$ goes to $i$ and $t$ to $w$, increasing the length of the cycle by one.

\end{proof}

\noindent \begin{examp}

We illustrate the above Proposition with a simple example, which will also demonstrate that the numbering we get from Proposition \ref{Alg2} will not, in general, be the natural numbering.

~
\noindent In Figure 10 we have a linear Brauer tree, which we reduce using Aihara's Algorithm to a Brauer star with $e=5$. 

~

\noindent \begin{figure*}[ht]
\centering
\includegraphics[scale=0.10]{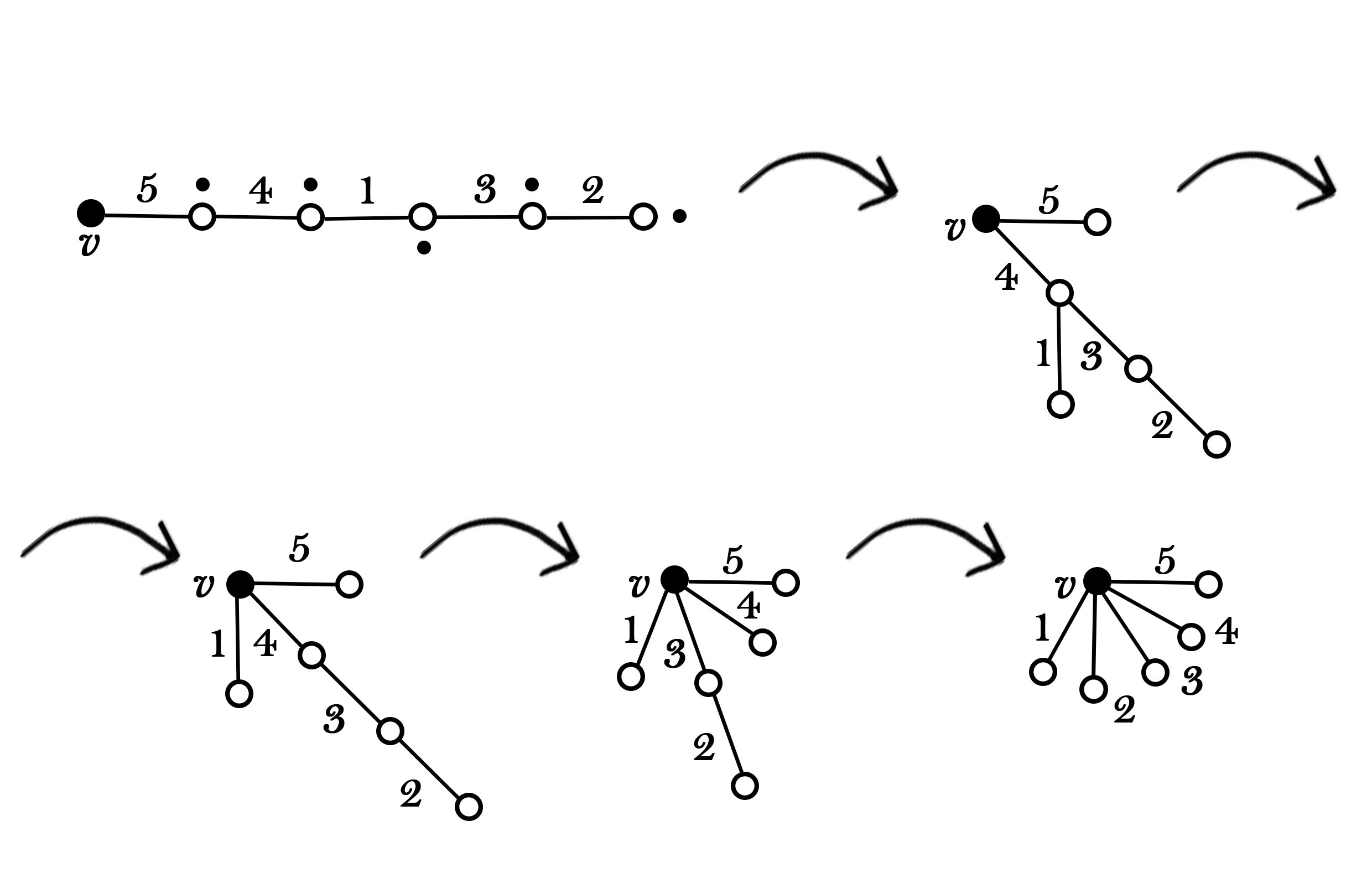}
\captionsetup{labelformat=empty}
\caption{Figure 10}
\end{figure*}

\noindent Now we compare this result with the composition of mutations as in Proposition \ref{Alg2}: 

\[
\begin{array}{cccccccccc}
F_{4}^{-1}(P_{1}^{''}): &  & 0 & \rightarrow & P_{1} & \rightarrow & P_{4} & \rightarrow & 0\\
F_{4}^{-1}(P_{2}^{''}):& & 0 & \rightarrow & P_{2} & \rightarrow & P_{3} & \rightarrow & 0\\
F_{4}^{-1}(P_{3}^{''}): &  & 0 & \rightarrow & P_{2} & \rightarrow & P_{4} & \rightarrow & 0\\
F_{4}^{-1}(P_{4}^{''}): &  &  0 & \rightarrow &   P_{1} & \rightarrow & P_{5} & \rightarrow & 0\\
F_{4}^{-1}(P_{5}^{''}): &  &  &  & 0 & \rightarrow & P_{5} & \rightarrow & 0
\end{array}
\]

\noindent This differs from  the folded star-to-tree complex for the pointed Brauer tree in Figure 11, 
constructed as in \cite{RS} by the ordering of the components of the tilting complex.  The image of 
$P_1^{''}$ and $P_4^{''}$ are exchanged, as are the images of $P_2^{''}$ and $P_3^{''}$.

\[
\begin{array}{cccccccccc}
H_{4}^{-1}(Q_{1}^{''}):  &  &   0 & \rightarrow & P_{1} & \rightarrow & P_{5} & \rightarrow & 0\\
H_{4}^{-1}(Q_{2}^{''}): &  & 0 & \rightarrow & P_{2} & \rightarrow & P_{4} & \rightarrow & 0\\
H_{4}^{-1}(Q_{3}^{''}): & & 0 & \rightarrow & P_{2} & \rightarrow & P_{3} & \rightarrow & 0\\
H_{4}^{-1}(Q_{4}^{''}):&  & 0 & \rightarrow & P_{1} & \rightarrow & P_{4} & \rightarrow & 0\\
H_{4}^{-1}(Q_{5}^{''}): &  &  &  & 0 & \rightarrow & P_{5} & \rightarrow & 0
\end{array}
\]

\noindent For completeness, we give  the corresponding Rickard tree-to-star complex.

\[
\begin{array}{ccccccccccc}
H_{4}(P_{1}): &  & & & 0 &\rightarrow  & Q_5^{''} & \rightarrow & Q_1^{''} & \rightarrow &  0\\
H_{4}(P_{2}):& &  &  & 0 & \rightarrow & Q_5^{''} \oplus  Q_{4}^{''} & \rightarrow &  Q_{1}^{''} \oplus Q_{2}^{''}  & \rightarrow  & 0\\
H_{4}(P_{3}):& &  &  & 0 & \rightarrow & Q_5^{''} \oplus  Q_{4}^{''} \oplus Q_{3}^{''} & \rightarrow &  Q_{1}^{''} \oplus Q_{2}^{''}  & \rightarrow  & 0\\
H_{4}(P_{4}):& &  &  & 0 & \rightarrow & Q_5^{''} \oplus  Q_{4}^{''} & \rightarrow &  Q_{1}^{''} & \rightarrow  & 0\\
H_{4}(P_{5}): &  &  &  & 0 & \rightarrow & Q_5^{''} & \rightarrow & 0
\end{array}
\]
\[
\begin{array}{ccccccccccc}
F_{4}(P_{1}):& &  &  & 0 & \rightarrow &  P_{5}^{''} & \rightarrow &  P_{4}^{''}  & \rightarrow  & 0\\
F_{4}(P_{2}):& &  &  & 0 & \rightarrow & P_5^{''} \oplus P_1^{''}  & \rightarrow & P_{4}^{''} \oplus P_{3}^{''}  & \rightarrow  & 0\\
F_{4}(P_{3}):& &  &  & 0 & \rightarrow & P_5^{''} \oplus  P_{1}^{''} \oplus  P_{2}^{''} & \rightarrow &  P_{4}^{''} \oplus P_{3}^{''}  & \rightarrow  & 0\\
F_{4}(P_{4}): &  & & & 0 &\rightarrow  & P_5^{''} \oplus  P_{1}^{''}  & \rightarrow &  P_{4}^{''}   & \rightarrow & 0\\
F_{4}(P_{5}): &  &  &  & 0 & \rightarrow & P_5^{''} & \rightarrow & 0
\end{array}
\]

\noindent The functor can be
obtained from the Rickard tree-to-star given by the functor $H_4$ above by a permutation of projectives
 exchanging
$1$ with $4$ and  of $2$ with $3$.

\noindent \begin{center}
\begin{figure*}[h]
\centering
\includegraphics[scale=0.15]{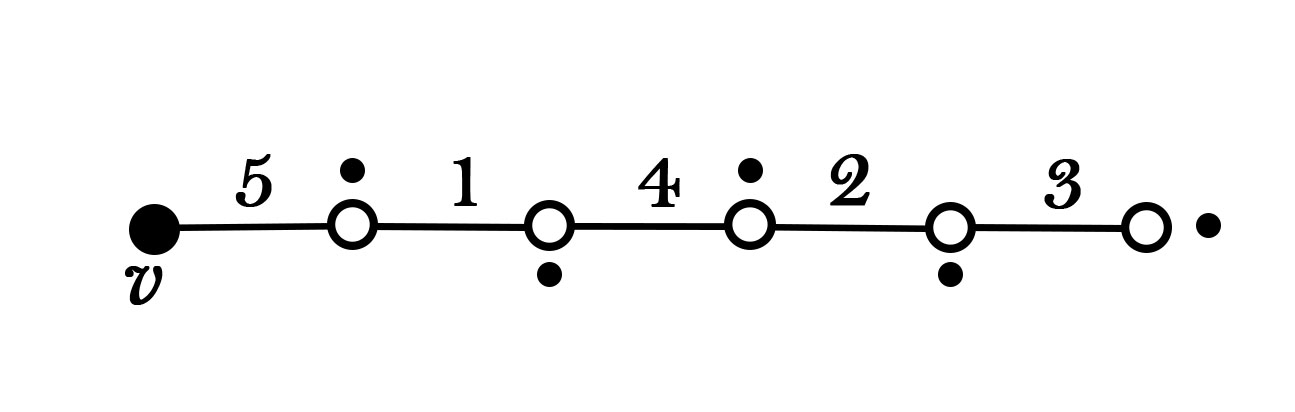}
\captionsetup{labelformat=empty}
\caption{Figure 11}
\end{figure*}
\par\end{center}
\end{examp}

\newpage{}\bibliographystyle{alpha}

\begin{thebibliography}{999}
\bibitem[Ai]{Ai}T. Aihara, Mutating Brauer trees. \emph{Math. J. Okayama Univ}. 56 (2014), 1--16.
\bibitem[AI]{AI}T. Aihara and O. Iyama,  Silting mutation in trangulated categories. J. London Math. Soc. (2012) doi: 10.1112/jlms/jdr055.
\bibitem[Ch]{Ch}A. Chan, Two-term tilting complexes and simple-minded systems of self-injective Nakayama algebras. \emph{Algebr. Represent. Theory 18}, no. 1, (2015), 183--203.
\bibitem[H]{H}D. Happel, \emph{Triangulated Categories in the Representation Theory of Finite Dimensional Algebras}, London Mathematical Society Lecture Note Series, 119. Cambridge University Press, Cambridge, (1988).
\bibitem [K] {K}M. Kauer, Derived equivalence of graph algebras, Contemporary Math., Vol. 229 (1998). doi:10.1090/conm/229/03319.
\bibitem [KZ1]{KZ1}S.  König and A. Zimmermann, Tilting hereditary orders,\emph{ Comm. in Alg.} vol. 24, (6) (1996), 1897-1913.
\bibitem[KZ2]{KZ2}S. König and A. Zimmermann,\emph{ Derived equivalences for group rings}. With contributions by Bernhard Keller, Markus Linckelmann, Jeremy Rickard and Raphaël Rouquier. Lecture Notes in Mathematics, 1685. Springer-Verlag, Berlin, (1998).
\bibitem[O]{O} T. Okuyama, {Some examples of derived equivalent blocks of finite groups,} (Japanese)  Proceedings of Conference on Group Theory, (1996):  English translations available as preprint.
\bibitem[R1]{R1}J. Rickard, Morita theory for derived categories, \emph{J.London Math. Soc.}, vol 39, no. 2, (1989), 436--456.
\bibitem[R2]{R2}J. Rickard, Derived equivalence and stable equivalence,  \emph{J. Pure and Appl. Alg.} 61, 303-317, (1989).
\bibitem[RS]{RS}J. Rickard and  M. Schaps, Folded tilting complexes for the Brauer tree algebras, \emph{Adv. Math.} 171, no.2, (2002), 169--182.
\bibitem[S]{S}M. Schaps, Deformations, tiltings, and decomposition matrices, \emph{Representations of Algebras and Related Topics}, Fields Institute Publications, (2005), 345--356.
\bibitem[SZ1]{SZ1}M. Schaps, E. Zakay-Illouz, Combinatorial partial tilting complexes for the Brauer star algebras. \emph{Representations of Algebras} (São Paulo, 1999), 187--207. Lecture Notes in Pure and Appl. Math., 224, Dekker, New York, (2002). 
\bibitem[SZ2]{SZ2}M. Schaps, E. Zakay-Illouz, Pointed Brauer trees. \emph{J. Algebra} 246, no. 2, (2001), 647--672.
\bibitem[SZ3]{SZ3}M. Schaps, E. Zakay-Illouz, Braid group actions on the refolded tilting complexes of the Brauer star algebra. \emph{Representations of Algebra}, Vol. I, II, Beijing Norm. Univ. Press, Beijing, (2002), 434--449.
\bibitem[SZ4]{SZ4}M. Schaps, E. Zakay-Illouz, Homogeneous deformations of Brauer tree algebras. \emph{Comm. Algebra} 31, no. 11, (2003), 5243--5252.
\bibitem[W]{W}C.A. Weibel, \emph{An Introduction to Homological Algebra}, Cambridge Studies in Advanced Mathematics, 38. Cambridge University Press, Cambridge, (1994).
\bibitem[Z]{Z} Z. Zvi, Combining mutations and pointing for Brauer trees, M.Sc. thesis, Bar-Ilan University, (2016).
\bibitem[Zv]{Zv} A. Zvonareva, On the derived Picard group of the Brauer star algebra, arXiv:1401.6952v3, (2015).
\end{thebibliography}
\renewcommand{\refname}{REFERENCES}

\end{document}